\DeclarePairedDelimiter{\ceil}{\lceil}{\rceil}
\newtheorem{theorem}{\textbf{Theorem}}
\newtheorem{lemma}{\textbf{Lemma}}
\newtheorem{definition}{\textbf{Definition}}
\newtheorem{assumptions}{\textbf{}}
\newcommand{\comment}[1]{}
\newcommand\subsubsubsection{\@startsection{paragraph}{4}{\z@}{-2.5ex\@plus -1ex \@minus -.25ex}{1.25ex \@plus .25ex}{\em{\normalsize}}}
\newcommand\subsubsubsubsection{\@startsection{subparagraph}{5}{\z@}{-2.5ex\@plus -1ex \@minus -.25ex}{1.25ex \@plus .25ex}{\em{\normalsize}}}
\DeclareMathOperator*{\argmin}{arg\,min}
\journal{}
\begin{document}
\begin{frontmatter}

\title{Spectral coefficient learning physics informed neural network for time-dependent fractional parametric differential problems
}
\author[label1]{S M Sivalingam}
\ead{siva915544@gmail.com}
\author[label1]{V Govindaraj}
\ead{govindaraj.maths@gmail.com}
\author[label2]{A. S. Hendy\corref{cor1}}
\ead{ahmed.hendy@fsc.bu.edu.eg}
\cortext[cor1]{Corresponding author: ahmed.hendy@fsc.bu.edu.eg}
\address[label1]{Department of Mathematics, National Institute of Technology Puducherry, Karaikal-609609, India} 
\address[label2]{Department of Computational Mathematics and Computer Science, Institute of Natural Sciences and Mathematics, Ural Federal University, 19 Mira St., Yekaterinburg, Russia 620002.} 

\begin{abstract}
The study of parametric differential equations plays a crucial role in weather forecasting and epidemiological modeling. These phenomena are better represented using fractional derivatives due to their inherent memory or hereditary effects. This paper introduces a novel scientific machine learning approach for solving parametric time-fractional differential equations by combining traditional spectral methods with neural networks. Instead of relying on automatic differentiation techniques, commonly used in traditional Physics-Informed Neural Networks (PINNs), we propose a more efficient global discretization method based on Legendre polynomials. This approach eliminates the need to simulate the parametric fractional differential equations across multiple parameter values. By applying the Legendre-Galerkin weak formulation to the differential equation, we construct a loss function for training the neural network. The trial solutions are represented as linear combinations of Legendre polynomials, with the coefficients learned by the neural network. The convergence of this method is theoretically established, and the theoretical results are validated through numerical experiments on several well-known differential equations.
\end{abstract}

\begin{keyword} 
Legendre-Galerkin method, Time-fractional parametric differential equation, Operator learning, Physics-informed neural network.
\end{keyword}
\end{frontmatter}


\section{Introduction}

Scientific machine learning (SciML) involves devising numerical algorithms to solve complex real-world problems using machine learning (ML) based methods. This area of research has several implications in the field of fluid dynamics \cite{Fluid dynamics}, aerospace engineering \cite{aerospace-engineering}, and control engineering \cite{control-engineering}. This field was initially popularized by the works of Lagaris et al. in \cite{Lagaris}. In the early $2000$s, several works were initiated contributing to the field of SciML. Such as the works of using evolutionary algorithms to solve ordinary differential equations (ODEs) by Cao et al. in \cite{early-1}, elliptic partial differential equations (PDEs) in \cite{early-3}, and unsupervised method to solve differential equations (DEs) in \cite{early-2}. All these studies were supported by the universal approximation capabilities of the neural networks (NN) presented as universal approximation theorem or UAT as provided by Cyberneko et al. in \cite{UAT}.

In the recent decade, the works on incorporating physics-based constraints inside the loss function of the neural network have gained high interest following the pioneering work of Raissi et al. in \cite{PINN-Raissi}. This breakthrough was named the physics-informed neural network or PINN. This has led to the revitalization of the interest in the field of SciML. This work was later extended to solve fractional order problems in \cite{FPINN-Pang} by Pang et al. Further extension of these methods to stochastic type equations was performed by Zhang et al. and Yang et al. in \cite{Zhang-PINN-Stochastic} and \cite{Yang-PINN-Stochastic}. The study of functions using NN later motivated the research community to study operators using the NN. This was achieved in \cite{Deeponet}. This introduced the NN to solve DEs by operator-based approximation, where the entire solution operator is approximated by using a two networks called the branch and trunk network.

Recently, the interest in hybridizing classical numerical and ML-based methods has gained momentum for highly challenging problems where classical methods lose accuracy. The famous works on combining the traditional numerical method with ML can be seen in the works of Mall et al. in \cite{supporting-leg-1}, where the authors employed the Legendre activation function in the neural network. Later, these were extended for faster learning of ODE by Yang et al. in \cite{supporting-leg-2}. The scheme developed was very similar to the spectral methods. These methods were based on extreme learning machines. Such numerical methods were devised for the fractional differential equations (FDEs) in \cite{supporting-leg-3}.

Similar works on combining the finite element method with ML were performed by Mitusch et al. in \cite{FEM-PINN-1}. Lienen et al. in \cite{FEM-PINN-2} introduced a spatiotemporal forecasting procedure when spare observations are available using the finite element networks (FEN). The idea was based on obtaining a PDE from the data by using the FEMs. The error analysis of such FENs was performed in \cite{FEM-PINN-3} by Kapustsin et al. Additionally, studies on using the FENs for solving the elastic-plastic problems, the incompressible fluid flow, and its associated inverse-type problems were investigated in \cite{FEM-PINN-4} and \cite{FEM-PINN-5}, respectively. These studies were extended to solve PDEs on arbitrary domains in \cite{FEM-PINN-6}. This study was based on UAT for NN. It utilized the Galerkin discretized weak PDE formulation in the loss function of the PINN framework to approximate the unknown function. Further, an application to the transient thermal conduction problem, a benchmark engineering problem, was solved using the proposed approach. 

The problems mentioned above dealt with non-parametric DEs. The study of DE with parameters following a probability distribution is more complicated to address. Methods to solve such DEs with integer order derivative were first discussed by Choi et al. in \cite{Spectral-PINN-1}. This study considered the source function of the PDE to be parametrized by a random parameter following an arbitrary distribution. They used the weak form of PDE with the Legendre function as a test function in the loss function. The solution was approximated by a linear combination of Legendre polynomials, where the coefficients were approximated by using the DNN. The complete error analysis of the method for linear type PDE was performed in \cite{Spectral-PINN-3}. Xia et al. later extended this method for PDE in the unbounded domain in \cite{Spectral-PINN-4}. Additionally, the classical FEM was coupled with the weak form, and a FEM-based operator learning method was proposed by Lee et al. in \cite{Spectral-PINN-5}. Here, the authors used the source function at arbitrary parameters as the input to the neural network. The spectral method with PINN was extended to solve problems where the initial or the boundary condition, source function, or the coefficients in the PDE were randomly chosen from an arbitrary distribution in \cite{Spectral-PINN-7}. Here, the authors demonstrated the method's performance on several well-known PDEs, such as the Burgers equation, Navier Stokes equation, and advection-diffusion equation. 

Though several numerical methods to simulate parametric DE with integer order derivative exist in the literature, the study of parametric DE with fractional order derivative is unexplored in the literature. Further, it is seen that there is no classical numerical method to simulate the parametric FDEs. Thus, simulation at each parameter value is required to obtain the solution. This is highly time-consuming and less efficient than simulating non-parametric FDEs. Therefore, SciML offers an innovative approach to solving such parametric FDEs with less computation cost and less memory. 

From all the above discussion and the need for a numerical method to simulate the parametric FDEs, we propose a novel numerical method for parametric FDEs by combining the traditional Legendre-Galerkin scheme and the PINN as an extension of the method proposed for integer order systems in \cite{Spectral-PINN-1}. Further, we present a detailed theoretical analysis along with the convergence of the proposed method. Thus, the overall contributions of this work are as follows:
\begin{itemize}
\item A novel numerical method to simulate the parametric FDEs is proposed.
\item The convergence of the proposed method is discussed under certain assumptions on the non-linear operator. 
\item Further, the extension of the proposed method to multi-dimensional problems is also discussed.
\item The theoretical convergence is proved numerically by applying the proposed method to several well-known linear and non-linear FDE problems.
\end{itemize}

The rest of the manuscript is organized as follows: Section \ref{Preliminaries} presents the essential fundamentals required for formulating the problem and the methodology. Section \ref{proposed-method} formulates the method for a one-dimensional FDE along with the construction of the solution and the formulation of the network. Section \ref{error-analysis-of-the-proposed-method} derives the error of the proposed method and presents a discussion on the convergence of the proposed method along with the theoretical proofs. Section \ref{Extension of the Proposed method to multidimensional FDE} extends the proposed method to multi-dimensional FDEs. The proposed method is validated on numerical examples, and the theoretical proofs are validated in Section \ref{Numerical experiments} followed by the discussion and conclusions in \ref{conclusion}.

\section{Preliminaries}\label{Preliminaries}
This section presents the preliminary concepts, such as the definition of the fractional derivative and the structure of the Legendre polynomial. All results are presented by considering $z\in \mathbb{R}$, which depends on either one variable or multiple-variable, usually called univariate or multivariate functions. Further we denote the fractional order to be $\zeta$ such that $\zeta \in (\kappa-1, \kappa],~\kappa \in \mathbb{N}$. The symbol $z^{(n)}$ is used to denote the $n^{th}$ order derivative of the function $z$.

Further we define $W^{\tilde{q},\hat{r}}(\Omega)$as the fractional order Sobolev space of functions \cite{fractional-sobolev-spaces} on $\Omega$ equipped with the norm $\|\cdot\|_{W^{\tilde{q},\hat{r}}(\Omega)}$, where $\tilde{q}\geq 0$ and $1\leq \hat{r} < \infty$. Particularly, $L^2(\Omega) = W^{0,2}(\Omega)$ and $H^{\tilde{q}}(\Omega) = W^{\tilde{q},2}(\Omega)$. We consider $\Omega = [0,1]$ for simplicity.

\begin{definition}\cite{1} \label{Caputo-der-definition}
The $\zeta^{th}$ fractional derivative in the sense of Caputo for a univariate function $z$ is defined by
\begin{equation}\nonumber
\centering
\begin{array}{lcl}
^C_0D_x^{\zeta} z(x) & = & \left\lbrace 
\begin{array}{ll} 
\displaystyle \frac{1}{\Gamma(\kappa-\zeta)}\int^{x}_0 (x-s)^{\kappa-\zeta-1} z^{(\kappa)}(s) ds,& \displaystyle  \zeta\in (\kappa-1,\kappa),\\
\displaystyle \frac{d^{\zeta}z(x)}{dx^{\zeta}},& \displaystyle \kappa=\zeta.
\end{array}
\right.
\end{array}
\end{equation} 
Accordingly, for a multivariate function $z :\Omega^n \to \mathbb{R}$, can be expressed as follows:
\begin{equation}\nonumber
\centering
\begin{array}{lcl}
^C_0D_{x_1}^{\zeta} z(x_1,x_2,\cdots,x_n) & = & \left\lbrace 
\begin{array}{ll} 
\displaystyle \frac{1}{\Gamma(\kappa-\zeta)}\int^{x_1}_0 (x_1-s)^{\kappa-\zeta-1} z^{(\kappa)}(s,x_2,\cdots,x_n) ds,& \displaystyle  \zeta\in (\kappa-1,\kappa),\\
\displaystyle \frac{d^{\zeta}z(x_1,x_2,\cdots,x_n)}{dx_1^{\zeta}},& \displaystyle \kappa=\zeta.
\end{array}
\right.
\end{array}
\end{equation} 
\end{definition}
\begin{definition}\cite{1}
The fractional integral corresponding to the derivative given in Definition \ref{Caputo-der-definition} for the univariate function $z$ is given by  
\begin{equation}\nonumber
\centering
^C_0D_x^{\zeta} z(x) = \frac{1}{\Gamma(\zeta)}\int^{x}_0 (x-s)^{\zeta-1} z(s) ds,~x>0.
\end{equation}
Similarly, the fractional derivative of a multivariate function $z:\Omega^n \to \mathbb{R}$ is given by
\begin{equation}\nonumber
\centering
^C_0D_{x_1}^{\zeta} z(x_1,x_2,\cdots,x_n) = \frac{1}{\Gamma(\zeta)}\int^{x_1}_0 (x_1-s)^{\zeta-1} z(s,x_2,\cdots,x_n) ds,~x_1>0.
\end{equation}
\end{definition}
\begin{definition}\cite{Spectral-PINN-3}
The Rademacher complexity of the $\mathbb{F}$, for a family of $\{\tilde{\Upsilon}_i\}_{i=1}^L$ of i.i.d random variables distributed according to distribution $\mathbb{P}_{\Omega}$ is given by 
\begin{equation} \nonumber
\centering
R_L(\mathbb{F}) = \mathbb{E}_{\{\tilde{\Upsilon}_i,\epsilon_i\}_{i=1}^L }\left[ \sup_{f \in \mathbb{F}}\left| \frac{1}{L} \sum_{i=1}^L \epsilon_i f(\tilde{\Upsilon}_i) \right| \right],
\end{equation}
where $\epsilon_i$'s are Bernoulli random variables.
\end{definition}
\subsection{Legendre polynomials}
The Legendre polynomials denoted by $\tilde{P}_n(x)$ on an interval $x\in[-1,1]$ are mutually orthogonal polynomials with respect to the $L^2$ inner product. These are defined by the recurrence relation
\begin{equation} \nonumber
\centering
\begin{array}{lcl}
(n+1)\tilde{P}_{n+1}(x)&=&(2n+1)x\tilde{P}_n(x)-n\tilde{P}_{n-1}(x),\\
\tilde{P}_0(x) &=&1,\\
\tilde{P}_1(x)&=&x.
\end{array}
\end{equation}
We use the shifted Legendre polynomial since our study involves the domain $[0, X],~X>0$. For this we shift the domain of $x$ by using the transformation $\displaystyle\frac{2x}{X}-1$ as in \cite{siva-1}. The following series formula defines the shifted Legendre polynomial:
\begin{equation} \label{Legendre-polynomial-series}
\centering
\begin{array}{lcl}
\hat{P}_n(x)&=&\displaystyle  \sum_{k=0}^n(-1)^{n+k}\frac{(n+k)!x^k}{X^k(n-k)!(k!)^2}.
\end{array}
\end{equation}
To impose boundary conditions implicitly, we follow the approach in \cite{fractional-spectral-source}, we define the modified Legendre polynomial as follows:
\begin{equation}\label{modified-Legendre-polynomial}
\centering
P_n(x) = \hat{P}_n(x)+a_n\hat{P}_{n+1}(x)+b_n\hat{P}_{n+2}(x),
\end{equation}
where the constants $a_n$ and $b_n$ are determined according to the initial condition.

\section{The proposed neural network-based method} \label{proposed-method}
This section presents the neural network-based approach to solving parametric time-FDE. Initially, a linear problem is considered, and the neural network-based method is derived, and then its theoretical analysis is presented. Later, a discussion on the extension to the non-linear case is presented. We use $f$ to denote the function when it is non-parametric and $f(x;\Upsilon)$ to denote the parametric function with parameter $\Upsilon$.
\subsection{Nonparametric one-dimensional FODEs}
For clarification purposes, we initiate by considering a nonparametric one-dimensional FODEs
\begin{equation} \label{1d-problem-general-form}
\centering
\begin{array}{lcl}
^C_0D_x^{\zeta} z + \hat{v} z^{(1)} &=& f,
\end{array}
\end{equation}
with the homogeneous Dirichlet boundary conditions
\begin{equation} \nonumber
\centering
z(0) = 0,~ z(1)=0,
\end{equation}
or with the homogeneous Neumann conditions
\begin{equation} \nonumber
\centering
z^{(1)}(0) = 0,~z^{(1)}(1) = 0.
\end{equation}
Any solution $z(x)$ to the problem \eqref{1d-problem-general-form} satisfies the following residual:
\begin{equation} \label{residual-for-PINN}
\centering
\begin{array}{lcl}
R(z)&=&\displaystyle(^C_0D_x^{\zeta} z + \hat{v} z^{(1)} - f).
\end{array}
\end{equation}
The above residual \eqref{residual-for-PINN} is utilized for training the network in the PINN procedure. This helps to obtain what is called the strong solution of the DE. However, not all DEs hold for a strong solution. Further calculation of higher-order derivatives when using strong form is difficult. Thus, a variational or weak form of the equation \eqref{1d-problem-general-form} is used to approximate the solution. The variational form of the equation \eqref{1d-problem-general-form} is given by
\begin{equation} \label{weak-form}
\centering
\begin{array}{lcl}
\displaystyle\int_0^1 \left( ^C_0D_x^{\zeta} z v(x)+ \hat{v} z v^{(1)}(x) \right) dx &=& \displaystyle\int_0^1 f(x)v(x)dx,~v\in H,
\end{array}
\end{equation}
where $H$ is some suitable Hilbert space. Before discussing the equation, it becomes essential to discuss the existence and uniqueness of the solution of such a system. As the system is linear in nature it becomes obvious that a solution exists for the system. The basic idea of using the variational form is to approximate the solution using some series approximation. Using the Legendre-Galerkin method \cite{fractional-spectral-source}, the solution of the equation \eqref{1d-problem-general-form} can be approximated as 
\begin{equation} \label{trial-solution}
\centering
z_N(x) = \sum_{k=1}^{N-1} \omega_k P_k(x),
\end{equation} 
where $N$ is the total number of basis functions used and $P_k$ denotes the $k^{th}$ order Legendre polynomial. Thus, the problem reduces to determine the value of $\omega_k$, which satisfies the following equation:
\begin{equation} \nonumber
\centering
\begin{array}{lcl}
\displaystyle\int_0^1 \left( ^C_0D_x^{\zeta} z_N v_N(x)+ \hat{v} z_N v_N^{(1)}(x) \right) dx &=& \displaystyle\int_0^1 f(x)v_N(x)dx,~v_N\in H_N,
\end{array}
\end{equation} 
where $H_N=\text{span}\{P_1, P_2, \cdots, P_{N-1}\}$. This can be further expressed in the following form:
\begin{equation} \nonumber
\centering
\displaystyle \left(\hat{H}+\hat{v}\hat{M}\right)\hat{\omega} = \hat{F},
\end{equation}
where $\displaystyle\hat{H}= (h)_{ij}$, $\displaystyle\hat{M}=(m)_{ij}$, $\displaystyle\hat{\omega} = \{\omega_1,\cdots,\omega_{N-1}\} \in \mathbb{R}^{N-1}$ and $\displaystyle\hat{F} = (f)_{ij}$, $\displaystyle h_{ij} =\int_{0}^1 P^{\zeta}_i(x)P_j(x)dx$, $\displaystyle m_{ij}=\int_{0}^1 P^{(1)}_i(x)P_j(x)dx$, $\displaystyle f_{ij}=\int_{0}^1 f(x)P_j(x)dx$ and 
\begin{equation} \label{Legendre-polynomial-derivative}
\centering
\begin{array}{lcl}
\hat{P}^{(1)}_n(x)&=&\displaystyle  \sum_{k=1}^n(-1)^{n+k}\frac{k(n+k)!x^{k-1}}{X^k(n-k)!(k!)^2}
\end{array}
\end{equation}
and
\begin{equation} \label{Legendre-polynomial-derivative-fractional}
\centering
\begin{array}{lcl}
\hat{P}^{\zeta}_n(x)&=&\displaystyle  \sum_{k=\ceil{\zeta}}^n(-1)^{n+k}\frac{\Gamma(k+1)(n+k)!x^{k-\zeta}}{X^k(n-k)!\Gamma(1-\zeta+k)(k!)^2}.
\end{array}
\end{equation}
The above algebraic equation can be solved using any numerical methods available in the literature, such as matrix inversion or elimination. Thus, the solution of the system can be obtained as
\begin{equation} \nonumber
\centering
\hat{\omega}  =\left(\hat{H} + \hat{v}\hat{M}\right)^{-1} F.
\end{equation}
The above technique is similarly constructed as in the spectral methods available in literature \cite{zayernouri2024spectral} for FDEs.
\subsection{Parametric one-dimensional FODEs}
If the function $f$ is parametrized by a random number $\Upsilon$. This $\Upsilon$ is assumed to be obtained from a probability space $(\hat{\Omega}, \mathbb{F}, \mathbb{P})$, where the domain $\Omega$ is assumed to be compact. Accordingly, we have
\begin{equation} \label{1d-problem-parametric-form}
\centering
\begin{array}{lcl}
^C_0D_x^{\zeta} z + \hat{v} z^{(1)} &=& f(x;\Upsilon).
\end{array}
\end{equation}
Further, as the parameter $\Upsilon$ is random, training using normal spectral methods is impossible since it requires obtaining $\omega_k's$ for different random numbers $\Upsilon \in \hat{\Omega}$. Thus, modern machine learning procedures are incorporated to learn the coefficients $\omega_k's$ for different values of $\Upsilon$. 
Thus, the solution representation as from \cite{Spectral-PINN-1} and \cite{Spectral-PINN-3} is of the form
\begin{equation} \label{trial-solution-for-parametric}
\centering
z(x;\Upsilon) = \sum_{k=1}^{N-1} \hat{\omega}_k(\Upsilon) P_k(x).
\end{equation}
To train this network at various values of $\Upsilon's$, the following form of loss function is utilized:
\begin{equation} \nonumber
\centering
\begin{array}{lcl}
\mathfrak{L}(\omega)&=&\displaystyle \mathbb{E}_{\Upsilon \sim \mathbb{P}_{\hat{\Omega}}}\left(\sum_{k=1}^{N-1}\left(\int_0^1 \left( ^C_0D_x^{\zeta} z(x; \Upsilon) P_k(x)+ \hat{v} z(x; \Upsilon) P_k^{(1)}(x) \right) dx\right.\right.\\
&&\left.\left.- \displaystyle\int_0^1 f(x; \Upsilon)P_k(x)dx\right)^2\right)\\
&=& \displaystyle \mathbb{E}_{\Upsilon \sim \mathbb{P}_{\hat{\Omega}}}\left(\sum_{k=1}^{N-1}\left( \left(\left(\hat{H}+\hat{v}\hat{M}\right)\hat{\omega}(\Upsilon)\right)_k - \left(\hat{F}(\Upsilon)\right)_k\right)^2\right).
\end{array}
\end{equation}
The subscript $k$ indicates the evaluation of the loss function at $P_k(x)$. In simple words, it means choosing the $k^{th}$ row of the matrices. This can be further simplified as follows:
\begin{equation} \nonumber
\centering
\begin{array}{lcl}
\mathfrak{L}(\omega)&=&\displaystyle \int_{\hat{\Omega}} \sum_{k=1}^{N-1}\left|\left(\left(\hat{H}+\hat{v}\hat{M}\right)\hat{\omega}(\Upsilon)\right)_k - \left(\hat{F}(\Upsilon)\right)_k\right|^2d\Upsilon\\
&=&\displaystyle \left\|\left(\hat{H}+\hat{v}\hat{M}\right)\hat{\omega}(\Upsilon) - \hat{F}(\Upsilon)\right\|^2_{L^2(\hat{\Omega})}.
\end{array}
\end{equation}
The above loss function $\mathfrak{L}(\omega)$ by using using Monte-Carlo integration \cite{Monte-Carlo-formula} simplifies as follows:
\begin{equation} \label{discrete-residual-minimization-problem}
\centering
\begin{array}{lcl}
\mathfrak{L}_R(\omega)&=&\displaystyle \frac{|\hat{\Omega}|}{L}\sum_{m=1}^{L}\left(\sum_{k=1}^{N-1}\left(\int_0^1 \left( ^C_0D_x^{\zeta} z(x; \Upsilon) P_k(x)+ \hat{v} z(x; \Upsilon) P_k^{(1)}(x) \right) dx\right.\right.\\
&&\left.\left.- \displaystyle\int_0^1 f(x; \Upsilon)P_k(x)dx\right)^2\right)\\
&=& \displaystyle \frac{|\hat{\Omega}|}{L}\sum_{m=1}^{L}\left|\left(\hat{H}+\hat{v}\hat{M}\right)\hat{\omega}(\Upsilon) - \hat{F}(\Upsilon)\right|^2.
\end{array}
\end{equation}
\subsection{Parametric Nonlinear one-dimensional FODEs}
 The proposed method is suitable for all forms of linear problems, and further, it can also be extended to non-linear problems. For this, the following form of non-linear system is considered:
\begin{equation}\label{1d-problem-general-form-nonlinear}
\centering
^C_0D_x^{\zeta} z + \mathcal{N}(z) = f(x;\Upsilon),
\end{equation}
where $\mathcal{N}(z)$ is nonlinear in terms of $z$. Similar to the previous one, the weak form of the FDE \eqref{1d-problem-general-form-nonlinear} can be obtained by multiplying $v\in H$ and integrating over $\Omega$. This yields:
\begin{equation}\nonumber
\centering
\begin{array}{lcl}
\displaystyle\int_0^1 \left( ^C_0D_x^{\zeta} z v(x)+ \mathcal{N}(z) v(x) \right) dx &=& \displaystyle\int_0^1 f(x)v(x)dx,~v\in H.
\end{array}
\end{equation}
Further, integration by parts can be performed to remove the higher order derivatives of the function $z$ similar to the one in \cite{Spectral-PINN-7}. Based on the assumption of existence of solution to the nonlinear FDE, a trial solution \eqref{trial-solution} is used in the weak form. This yields
\begin{equation} \nonumber
\centering
\begin{array}{lcl}
\displaystyle\int_0^1 \left( ^C_0D_x^{\zeta} z_N v_N(x)+ \mathcal{N}(z_N) v_N(x) \right) dx &=& \displaystyle\int_0^1 f(x)v_N(x)dx,~v_N\in H_N.
\end{array}
\end{equation} 
Further, through simplification, one can obtain a non-linear system of the form
\begin{equation}\nonumber
\centering
\begin{array}{lcl}
\hat{A}(\hat{\omega}) = \hat{F},
\end{array}
\end{equation}
where the function $\hat{A}$ depends on the nonlinear operator $\mathcal{N}$. Considering the system \eqref{1d-problem-general-form}, the function $\hat{A}$ is a matrix $\left(\hat{H} + v\hat{M}\right)(\hat{\omega})$. While the function $f$ is parametrized by $\Upsilon$, the nonlinear system takes the form
\begin{equation}\nonumber
\centering
\hat{A}(\hat{\omega}(\Upsilon)) = \hat{F}.
\end{equation}
Further when $\hat{F}$ is parametrized by using $\Upsilon$ the solution in the form of \eqref{trial-solution-for-parametric} can be obtained by minimizing
\begin{equation}\nonumber
\centering
\begin{array}{lcl}
\mathfrak{L}(\omega)&=&\displaystyle \int_{\hat{\Omega}} \sum_{k=1}^{N-1}\left|\left(\hat{A}\left(\hat{\omega}(\Upsilon)\right)\right)_k - \left(\hat{F}(\Upsilon)\right)_k\right|^2d\Upsilon,\\
&=&\displaystyle \left\|\hat{A}\left(\hat{\omega}(\Upsilon)\right) - \hat{F}(\Upsilon)\right\|^2_{L^2(\hat{\Omega})}
\end{array}
\end{equation} 
for $\hat{\omega}$. This is a residual minimization problem as in \eqref{discrete-residual-minimization-problem}.
Now, it is to be noted that for the problem \eqref{1d-problem-general-form}, the matrix $\hat{H}$ and $\hat{M}$ are not symmetric. Thus, the proposed method generally does not construct a symmetric matrix $\hat{A}$. Further, for non-linear problem \eqref{1d-problem-general-form-nonlinear}, symmetricity is not possible. For the linear case, this is shown by generating the matrix $\hat{H}$ and $\hat{M}$ for $N=12$ in the Appendix. Also, it is observed that the matrix $\hat{H}$ is not any special kind of matrix. Further, $\hat{H}$ and $\hat{M}$ are nonsingular and thus invertible. Also, the determinant of their inverse is non-zero. To proceed further, the following assumptions are made on the FDE:
\begin{assumptions} \label{assump:H1}
For $\hat{\omega}_1,~\hat{\omega}_2 \in C([0,T])$, we have
\begin{equation}\nonumber
\centering
\|\hat{\omega}_1-\hat{\omega}_2\|_2^2 \leq C_{\hat{A}}(\|\hat{\omega}_1\|_2^2,\|\hat{\omega}_2^2\|) \|\hat{A}(\hat{\omega}_1)-\hat{A}(\hat{\omega}_2)\|_2^2,
\end{equation}
where $C_{\hat{A}}>0$ is a constant which depends on both $\hat{\omega}_1$, and $\hat{\omega}_2$ and also $\hat{A}(\hat{\omega})$ in norm is bounded and non-zero.
\end{assumptions}
The above assumption can be viewed as a bound on the linearization of the inverse of $\hat{A}$, that is, if $\bar{A}$ is the linearisation of $\hat{A}$, then $\|\bar{A}^{-1}\|_2^2\leq C_{\hat{A}}$. In addition, the bound $C_{\hat{A}}$ is directly influenced by $\mathcal{N}$. The above assumption is true whenever the inverse of the linearization of $\mathcal{N}$ is bounded. This is true when $\mathcal{N}$ takes the form $\Delta z$, $z\Delta z$, $z^2$, $z^3$ and $\sin(z)$.  Further, such assumptions have been utilized to provide an error estimate for PINN in \cite{PINN-error-estimates}, for elliptic PDE in \cite{elliptic PDE} and as explained in \cite{weak-PINN} the above bound is necessary for the stability of the differential equation which ensures that the total can be estimated by residuals. It also suggested for nonlinear DEs the stability often relies on the regularity of the unknown solution of the DE.

\begin{assumptions}\label{assump:H2}
For bounded constants $a_n,~b_n$ and $c_n$, the modified Legendre polynomial ${P}_n(x)$ for $x \in [0,T]$, $T<\infty$ is bounded and let its bound be given by $C_{{P}}$, that is 
\begin{equation}\nonumber
\centering
\sup_{x\in [0,T], n\in \{0,\cdots,N-1\}}\{\left|P_n(x)\right|\} \leq C_{{P}}.
\end{equation}
Also, let the derivatives ${P}_n^{(i)}(x)$ and ${P}_n^{\zeta}(x)$ satisfy:
\begin{equation}\nonumber
\centering
\begin{array}{lcl}
\displaystyle\sup_{x\in [0,T], n\in \{0,\cdots,N-1\}}\{\left|{P}_n^{(i)}(x)\right|\}&\leq& C_{{P}^{(i)}},\\
\displaystyle\sup_{x\in [0,T], n\in \{0,\cdots,N-1\}}\{\left|{P}_n^{\zeta}(x)\right|\}&\leq& C_{{P}^{\zeta}}.\\
\end{array}
\end{equation}
\end{assumptions}
The Assumption \ref{assump:H2} holds for the modified Legendre polynomial since the shifted Legendre polynomial $\hat{P}_n(x)$ is bounded on a bounded domain $\Omega$ and its derivatives ($\hat{P}_n^{(i)}(x)$ and $\hat{P}_n^{\zeta}(x)$) are also bounded. This directly makes the modified Legendre polynomial satisfy the Assumption \ref{assump:H2}.

\begin{assumptions}\label{assump:H3}
Let $\mathcal{N}$ be Lipschitz, then for $\hat{\omega}_1,~\hat{\omega}_2 \in C([0,T])$, $\mathcal{A}$ satisfy the following condition:
\begin{equation}\nonumber
\centering
\|\hat{A}(\hat{\omega}_1)-\hat{A}(\hat{\omega}_2)\|_2^2 \leq C_{\hat{A}_L}\|\hat{\omega}_1-\hat{\omega}_2\|_2^2,
\end{equation}
where $C_{\hat{A}_L}>0$ and depends on Lipschitz constant of the function $\mathcal{N}$ denoted by $L$ and the bounds $C_P$, $C_P^{(i)}$ and $C_P^{\zeta}$. 
\end{assumptions}
The Assumption \ref{assump:H3} is inherently satisfied when $\mathcal{N}$ is linear. For non-linear problems, it is satisfied if the $\mathcal{N}$ is Lipschitz. For example, if $\mathcal{N}(\cdot)$ is given by $\sin(z)$ or $\exp(z)$ or any other Lipschitz function.
Further, to summarize, the Assumptions \ref{assump:H2} are inherently satisfied if $n$ (polynomial order), $x$ (domain), and $i$ (derivative order) are finite. The Assumptions \ref{assump:H1} and \ref{assump:H3} are inherently satisfied for linear functions. For non-linear functions, these are satisfied when $\hat{A}$ is bi-Lipschitz. That is when $\hat{A}$ is injective Lipschitz and its inverse given by $\hat{A}^{-1}$ or the inverse of linearization of $\hat{A}$ given by $\bar{A}^{-1}$ is also Lipschitz. Such $N(z)$ are $z^3$ when $z$ is non-zero and $\sin(z)$ when $z$ is bounded. 
\subsection{Formulation of neural network}
This section describes the neural network architecture utilized in this study. The proposed numerical method blends the Legendre-Galerkin spectral method and the deep neural network architecture. This tries to increase the generalization ability of the Legendre spectral method. As explained in the previous section, the Legendre-Galerkin spectral method is a basic approach to obtain the solutions of FDEs. However, this approach is not suitable for parametrized problems. So, to overcome this, a deep neural network is used to learn the coefficient $\omega_k's$ for each value of $\Upsilon$. This is done by training the neural network to approximate the solution of the DEs $z$ at various values of $\Upsilon$. One advantage of this approach is obtaining a closed-form solution of the DE for all the values of $\Upsilon$. Also, it is suitable for very large dimensional problems when $\Upsilon$ has a huge dimension. To study this, the neural network architecture is first described. 

The neural network used is a basic DNN with $H$ number of layers. Let $\omega_N(x)$ represent the neural network architecture and its mathematical description is given by
\begin{equation} \nonumber
\centering
\begin{array}{lcl}
\omega_N^{r}(x) &=&\left\lbrace\displaystyle
\begin{array}{ll}
\mathcal{W}^{1}x+\mathcal{B}^{1},&r=1,\\
\mathcal{W}^{r}\sigma(\omega_N^{r-1})+\mathcal{B}^{r},&2\leq l \leq H,
\end{array}
\right.
\end{array}
\end{equation}
where $\mathcal{W}^{r}\in\mathbb{R}^{n_r \times n_{r-1}}$ and $\mathcal{B}^{r}\in\mathbb{R}^{n_r}$ denotes the weights and the bias used in the $r^{th}$ hidden layer, $\sigma$ denotes the activation function used. This study uses the $\tanh$ activation function since it is commonly used for approximating the solutions to DEs. Further let $\hat{n} = (n_0, \cdots, n_L)$. Let the family of network parameters be denoted by $\hat{\theta} = \hat{\theta}_{\hat{n}}=\{ (\mathcal{W}^{1},\mathcal{B}^{(1)}),\cdots, (\mathcal{W}^{H},\mathcal{B}^{H}) \}$ and its realization as a neural network by $\mathcal{R}[\hat{\theta}](x)$. For a given $\hat{n}$, the collection of all possible parameters be denoted by
\begin{equation}\nonumber
\centering
\hat{\Theta}_{\hat{n}} = \{\{(\mathcal{W}^{r},\mathcal{B}^{r})\}:\mathcal{W}^{r}\in\mathbb{R}^{n_r \times n_{r-1}}~\text{and}~\mathcal{B}^{r}\in\mathbb{R}^{n_r}\}.
\end{equation}
For two different neural network architectures, we write $\hat{n}_1 = \{n_1^{(1)},\cdots,n_L^{(1)}\} \subset \hat{n}_2 = \{n_1^{(2)},\cdots,n_L^{(2)}\}$ if $\forall$ $\hat{\theta}_1 \in \hat{\Theta}_{\hat{n}_1}$, $\exists$ $\hat{\theta}_2 \in \hat{\Theta}_{\hat{n}_2}$ $\ni$ $\mathcal{R}[\hat{\theta}_1](x) = \mathcal{R}[\hat{\theta}_2](x)$. Let $\{\hat{n}_n\}_{n\geq 1}$ be a sequence of networks with the inclusion $\hat{n}_n \subset \hat{n}_{n+1}$ $\forall n \in \mathbb{N}$. The corresponding sequence of the neural network is represented by
\begin{equation}\nonumber
\centering
\mathfrak{N}^*_n = \{\mathcal{R}[\hat{\theta}](x), \hat{\theta} \in \hat{\Theta}_{\hat{n}_n}\}.
\end{equation}
Since the function $f$ parametrized by $\Upsilon$ is continuous and bounded, the unknown coefficients $\omega^*$ are also continuous and bounded and let the bound be given by $\sup_{\Upsilon\in \hat{\Omega}}\left|\omega^*(\Upsilon)\right| = C_{\omega^*}$. Also, the activation function $\sigma$ is bounded and let its upper and lower bound be given by $\sigma^{-}$ and $\sigma^{+}$. Let us define $h:[\sigma^{-},\sigma^{+}] \to [-C_{\omega^*},C_{\omega^*}]$ which is affine continuous function satisfying $h(\sigma^{-}) = -C_{\omega^*}$ and $h(\sigma^{+}) = C_{\omega^*}$. Thus, the task is to approximate the target function $\omega^*$. Let 
\begin{equation}
\centering
\displaystyle\mathfrak{N}_n = \left\lbrace h (\sigma(g_n)),~g_n\in \mathfrak{N}^*_n\right\rbrace.
\end{equation}
Then by the definition of $h$, $\displaystyle\|g\|_{C({\hat{\Omega}})}\leq C_{\omega^*},~\forall g\in \cup_{n\in \mathbb{N}} \mathfrak{N}_n$. Thus we have $\displaystyle\sigma^{-1}(h^{-1}(\omega^*))$ to be well defined and continuous on a compact set $\hat{\Omega}$. Then, by the UAT the following holds
\begin{equation}
\centering
\begin{array}{lcl}
\displaystyle\lim_{n\to\infty} \inf _{\hat{g}\in\mathfrak{N}_n^*} \left\| \hat{g}-\sigma^{-1}(h^{-1}(\omega^*)) \right\|_{C({\hat{\Omega}})} = 0.
\end{array}
\end{equation}
Additionally using the continuity of $h(\sigma(\cdot))$ we have
\begin{equation}
\centering
\begin{array}{lcl}
\displaystyle \lim_{n\to\infty} \inf _{g \in\mathfrak{N}_n} \left\| g-\omega^* \right\|_{C({\hat{\Omega}})} = \lim_{n\to\infty} \inf _{\hat{g}\in\mathfrak{N}_n^*} \left\| h(\sigma(\hat{g}))-\omega^* \right\|_{C({\hat{\Omega}})}= 0.
\end{array}
\end{equation}
This shows that $\exists$ $g \in \mathfrak{N}_n$ such that it could approximate the unknown coefficients $\omega^*$ for sufficiently large $n$.
\subsection{Construction of solution}
The solution of the DE $z(x; \Upsilon)$ is approximated using a linear combination of Legendre polynomials. This is given by 
\begin{equation}\nonumber
\centering
\tilde{z}_N(x; \Upsilon) = \sum_{k=1}^{N-1}\omega^*_k(\Upsilon) P_k(x),
\end{equation}
where $P_k$ is the modified Legendre polynomial which satisfies the boundary condition inherently as explained in Section \ref{Preliminaries} and $\omega^*_k \in C(\hat{\Omega}, \mathbb{R}^{N-1})$ are the set of coefficients to be determined. Further $*$ is used to denote the coefficients that satisfy the weak form \eqref{weak-form}. The $\tilde{z}(x)$ presented above is similar to the Legendre-Galerkin approximation. Now $\tilde{z}$ depends on the random number $\Upsilon$. Thus for $\tilde{z}$ to be the solution of parametric equation \eqref{1d-problem-general-form} or \eqref{1d-problem-general-form-nonlinear}, the following should be satisfied:
\begin{equation}\nonumber
\centering
\omega_k^* = \argmin_{\omega \in C(\hat{\Omega}, \mathbb{R}^{N-1})} \mathfrak{L}(\omega).
\end{equation}
The coefficients $\omega^*$ are now approximated using the DNN given by $\omega_N$. Thus, the modified objective is given by
\begin{equation}\nonumber
\centering
\hat{\omega_n} = \argmin_{\omega_N \in \mathfrak{N}_n} \mathfrak{L}(\omega_N).
\end{equation}
Further, since the continuous form of loss function $\mathfrak{L}$ is approximated by using the discrete form given by $\mathfrak{L}_R$. The task becomes the minimization of the discrete residual minimization problem given by
\begin{equation}\nonumber
\centering
\hat{\omega_{n,L}} = \argmin_{\omega_N \in \mathfrak{N}_n} \mathfrak{L}_R(\omega_N).
\end{equation}
Thus, the corresponding solution can be represented as follows:
\begin{equation}\nonumber
\centering
\tilde{z}_{N,n,L}(x; \Upsilon) = \sum_{k=1}^{N-1}
{\hat{\omega}_{n,L,k}}
(\Upsilon) P_k(x).
\end{equation}
\section{Error analysis of the proposed method}\label{error-analysis-of-the-proposed-method}
The error in the proposed method is given by 
\begin{equation}\nonumber
\centering
\|z-\tilde{z}_{N,n,L}\|_{L^2(\hat{\Omega};H^{1}(\Omega))} \to 0~\text{as}~N,n,L \to \infty.
\end{equation}
Further, by using triangle inequality, we obtain the following
\begin{equation}\nonumber
\centering
\|z-\tilde{z}_{N,n,L}\|_{L^2(\hat{\Omega})} \leq \|z-\tilde{z}_{N}\|_{L^2(\hat{\Omega})} + \|\tilde{z}_{N}-\tilde{z}_{N,n}\|_{L^2(\hat{\Omega})}+ \|\tilde{z}_{N,n}-\tilde{z}_{N,n,L}\|_{L^2(\hat{\Omega})}.
\end{equation}
The error $\|z-\tilde{z}_{N}\|_{L^2(\hat{\Omega})},$ represents the error in the approximation of the weak solution of the FDE using the Legendre-Galerkin method. It has been proved in \cite{fractional-spectral-source}, that the approximation $\tilde{z}_{N}$ approaches $z$ as the value of $N$ tends to $\infty$. However, it has been observed in \cite{siva-1, siva-2, supporting-leg-3} that as the value of $N$ increases, the solution obtained by the Legendre approximation deteriorates. Thus optimal value for $N$ has been used in \cite{siva-1, siva-2, supporting-leg-3}. This was also observed for integer order problems in \cite{Spectral-PINN-3}. 

The second part $\|\tilde{z}_{N}-\tilde{z}_{N,n}\|_{L^2(\hat{\Omega})}$ denotes the error between the target function $\omega^*$ and the $\hat{\omega_n}$. This is also called the approximation error. The following theorem proves the convergence of $\hat{\omega_n}$ to $\omega^*$, which is based on the universal approximation theorem as in \cite{UAT}.
\begin{theorem}\label{2st-convergence-theorem}
For $f\in C(\hat{\Omega},L^1(\Omega))$ and $\omega^*\in C(\hat{\Omega},\mathbb{R}^{N-1})$ and $\hat{\omega_{n}}\in \mathfrak{N}_{n}$, the error  
\begin{equation} \nonumber
\centering
\|\omega^*-\hat{\omega_n}\|_{L^2(\hat{\Omega})} \to 0~\text{as}~n\to\infty.
\end{equation}
\end{theorem}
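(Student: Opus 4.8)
The plan is to combine the universal approximation property established just above the theorem with the two-sided stability estimates of Assumptions \ref{assump:H1} and \ref{assump:H3}, exploiting the fact that the target $\omega^*$ is an exact zero of the loss functional. First I would observe that, since the starred coefficients satisfy the weak form \eqref{weak-form} exactly, we have $\hat{A}(\omega^*(\Upsilon)) = \hat{F}(\Upsilon)$ for $\mathbb{P}$-a.e.\ $\Upsilon \in \hat{\Omega}$, so that $\mathfrak{L}(\omega^*) = 0$. Consequently the loss of any admissible competitor can be rewritten as $\mathfrak{L}(\omega) = \|\hat{A}(\omega) - \hat{A}(\omega^*)\|_{L^2(\hat{\Omega})}^2$, recasting it as a measure of how far $\hat{A}(\omega)$ sits from $\hat{A}(\omega^*)$.

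Next I would apply Assumption \ref{assump:H1} pointwise in $\Upsilon$ and integrate over $\hat{\Omega}$ to get
\[
\|\omega^* - \hat{\omega}_n\|_{L^2(\hat{\Omega})}^2 \leq C_{\hat{A}}\,\|\hat{A}(\omega^*) - \hat{A}(\hat{\omega}_n)\|_{L^2(\hat{\Omega})}^2 = C_{\hat{A}}\,\mathfrak{L}(\hat{\omega}_n).
\]
For this integration to be legitimate I must check that $C_{\hat{A}}$ can be chosen independently of $n$, and this is where the bounded construction of $\mathfrak{N}_n$ pays off: every $g \in \mathfrak{N}_n$ obeys $\|g\|_{C(\hat{\Omega})} \leq C_{\omega^*}$ and $\omega^*$ is likewise bounded, so the constant in Assumption \ref{assump:H1} — which depends only on $\|\omega^*\|_2$ and $\|\hat{\omega}_n\|_2$ — stays bounded uniformly in $n$.

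The optimality of $\hat{\omega}_n = \argmin_{\omega_N \in \mathfrak{N}_n} \mathfrak{L}(\omega_N)$ then lets me replace $\mathfrak{L}(\hat{\omega}_n)$ by the loss of any competitor. Letting $g_n \in \mathfrak{N}_n$ denote a near-best sup-norm approximant of $\omega^*$ supplied by the universal approximation argument, minimality gives $\mathfrak{L}(\hat{\omega}_n) \leq \mathfrak{L}(g_n) = \|\hat{A}(g_n) - \hat{A}(\omega^*)\|_{L^2(\hat{\Omega})}^2$, and Assumption \ref{assump:H3} upgrades this to $\mathfrak{L}(\hat{\omega}_n) \leq C_{\hat{A}_L}\,\|g_n - \omega^*\|_{L^2(\hat{\Omega})}^2$. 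Chaining the inequalities yields
\[
\|\omega^* - \hat{\omega}_n\|_{L^2(\hat{\Omega})}^2 \leq C_{\hat{A}}\,C_{\hat{A}_L}\,\|g_n - \omega^*\|_{L^2(\hat{\Omega})}^2.
\]
Since $\hat{\Omega}$ is compact and hence of finite measure, the sup-norm convergence $\|g_n - \omega^*\|_{C(\hat{\Omega})} \to 0$ furnished by the universal approximation theorem forces $\|g_n - \omega^*\|_{L^2(\hat{\Omega})} \to 0$, so the right-hand side vanishes as $n \to \infty$.

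I expect the main obstacle to be precisely the uniformity of the stability constant $C_{\hat{A}}$ across the sequence $\{\hat{\omega}_n\}$: Assumption \ref{assump:H1} only provides a constant depending on the norms of its two arguments, so without the a priori boundedness of the network outputs the integrated estimate in the second step could degenerate. The affine truncation $h$ built into the definition of $\mathfrak{N}_n$ is what enforces this uniform bound, and verifying this — together with the measurability of $\Upsilon \mapsto \hat{\omega}_n(\Upsilon)$ needed to integrate the pointwise estimate — is the delicate part; the remaining steps are routine uses of minimality and the Lipschitz bound of Assumption \ref{assump:H3}.
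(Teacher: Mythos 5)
Your proposal is correct and follows essentially the same route as the paper's own proof: Assumption \ref{assump:H1} bounds $\|\omega^*-\hat{\omega}_n\|_{L^2(\hat{\Omega})}^2$ by $C_{\hat{A}}\,\mathfrak{L}(\hat{\omega}_n)$, minimality of $\hat{\omega}_n$ together with Assumption \ref{assump:H3} bounds the loss by $C_{\hat{A}_L}\inf_{\omega_N\in\mathfrak{N}_n}\|\omega_N-\omega^*\|_2^2$, and the universal approximation theorem sends this to zero. If anything, you are more careful than the paper, which silently uses $\hat{A}(\omega^*)=\hat{F}$ to discard the $\|\hat{A}(\omega^*)-\hat{F}\|_2^2$ terms and never addresses the $n$-uniformity of the constant $C_{\hat{A}}$ that your truncation argument via $h$ supplies.
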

\begin{proof}
Using the Assumption \ref{assump:H1}, the following can be obtained:
\begin{equation}\nonumber
\centering
\begin{array}{lcl}
\displaystyle\|\omega^*-\hat{\omega_n}\|_2^2& \leq&\displaystyle C_{\hat{A}}\|\hat{A}(\omega^*)-\hat{A}(\hat{\omega_n})\|_2^2\\
&\leq& \displaystyle C_{\hat{A}}\left(\|\hat{A}(\omega^*)-\hat{F}\|_2^2+ \|\hat{A}(\hat{\omega_n})-\hat{F}\|_2^2\right)\\
&\leq& \displaystyle C_{\hat{A}} \mathfrak{L}(\hat{\omega_n}).
\end{array}
\end{equation}
Similarly using Assumption \ref{assump:H3}, the following is obtained:
\begin{equation} \nonumber
\centering
\begin{array}{lcl}
\mathfrak{L}(\hat{\omega_n})&\leq& \displaystyle \inf_{\omega_N \in \mathfrak{N}_n} \mathfrak{L}(\omega_N)\\
&\leq& \displaystyle \inf_{\omega_N \in \mathfrak{N}_n} \left\lbrace\|\hat{A}(\hat{\omega_n})-\hat{A}(\omega^*)\|^2_2+\|\hat{A}(\omega^*)-\hat{F}\|_2^2\right\rbrace\\
&\leq& \displaystyle C_{\hat{A}_L} \inf_{\omega_N \in \mathfrak{N}_n} \left\lbrace\|\hat{\omega_n}-\omega^*\|^2_2\right\rbrace.
\end{array}
\end{equation}
Thus, using UAT, it can be concluded that $\displaystyle \inf_{\omega\in\mathfrak{N}_n} \|\omega^*-\hat{\omega_n}\|^2_2 \to 0$ as $n \to \infty$ and so $\mathfrak{L}(\hat{\omega_n})\to 0$ as $n\to\infty$.
\end{proof}

 The third part $\|\tilde{z}_{N,n}-\tilde{z}_{N,n, L}\|_{L^2(\hat{\omega})}$ is the generalization error of the neural network on the unseen data. Thus, now the work is to focus on establishing the generalization error that is
\begin{equation} \nonumber
\centering
\|\hat{\omega_n}-\hat{\omega_{n,L}}\|_{L^2(\hat{\omega})}.
\end{equation}

\begin{theorem}\cite{ref-1}
Let $\mathbb{F}$ be a $\tilde{P}$-uniformly bounded class of functions and $L \in\mathbb{N}$. Then for any small number $\Lambda >0$
\begin{equation}\nonumber
\centering
\sup_{f\in \mathbb{F}} \left| \frac{1}{L}\sum_{i=1}^L f(\tilde{\Upsilon}_i)-\mathbb{E}(f(\hat{\Upsilon})) \right| \leq 2R_L(\mathbb{F})+\Lambda
\end{equation}
with probability of $\displaystyle 1-\exp\left(\frac{-L\Lambda^2}{2\tilde{P}^2}\right)$.
\end{theorem}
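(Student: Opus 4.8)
The plan is to prove this as a standard uniform deviation bound, combining a concentration inequality with a symmetrization argument. First I would introduce the empirical process
\[
\Phi(\tilde{\Upsilon}_1,\dots,\tilde{\Upsilon}_L) = \sup_{f\in\mathbb{F}}\left|\frac{1}{L}\sum_{i=1}^L f(\tilde{\Upsilon}_i)-\mathbb{E}(f(\hat{\Upsilon}))\right|,
\]
which is exactly the quantity to be controlled. The proof then rests on two ingredients: (i) $\Phi$ concentrates sharply about its mean, and (ii) $\mathbb{E}[\Phi]$ is bounded above by twice the Rademacher complexity $R_L(\mathbb{F})$.

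For step (i) I would invoke McDiarmid's bounded-differences inequality. Since $\mathbb{F}$ is $\tilde{P}$-uniformly bounded, i.e. $|f|\le \tilde{P}$ for all $f\in\mathbb{F}$, replacing a single coordinate $\tilde{\Upsilon}_i$ by an independent copy changes each empirical average $\frac{1}{L}\sum_j f(\tilde{\Upsilon}_j)$ by at most $2\tilde{P}/L$, and hence changes the supremum $\Phi$ by at most $2\tilde{P}/L$ as well. Plugging the bounded-difference constants $c_i = 2\tilde{P}/L$ into McDiarmid's inequality yields
\[
\mathbb{P}\bigl(\Phi - \mathbb{E}[\Phi]\ge \Lambda\bigr)\le \exp\!\left(\frac{-2\Lambda^2}{\sum_{i=1}^L c_i^2}\right)=\exp\!\left(\frac{-L\Lambda^2}{2\tilde{P}^2}\right),
\]
which is precisely the failure probability stated in the theorem. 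Consequently, with probability at least $1-\exp(-L\Lambda^2/(2\tilde{P}^2))$ we have $\Phi\le \mathbb{E}[\Phi]+\Lambda$.

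For step (ii) I would carry out the classical symmetrization argument. I introduce a ghost sample $\{\tilde{\Upsilon}_i'\}$ that is an independent copy of $\{\tilde{\Upsilon}_i\}$, rewrite $\mathbb{E}(f(\hat{\Upsilon}))$ as the expectation over the ghost sample of $\frac{1}{L}\sum_i f(\tilde{\Upsilon}_i')$, pull the supremum inside this expectation using Jensen's inequality, and then insert the Bernoulli (Rademacher) sign variables $\epsilon_i$. Because each difference $f(\tilde{\Upsilon}_i)-f(\tilde{\Upsilon}_i')$ is symmetric, multiplying the $i$-th term by $\epsilon_i$ leaves its law unchanged; splitting the resulting supremum into its two halves and applying the triangle inequality gives $\mathbb{E}[\Phi]\le 2R_L(\mathbb{F})$ directly from the definition of the Rademacher complexity stated in the Preliminaries.

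Combining (i) and (ii) on the same high-probability event yields $\Phi\le 2R_L(\mathbb{F})+\Lambda$, which is the claim. The main obstacle I anticipate is making the symmetrization step fully rigorous — in particular justifying the interchange of the supremum with the ghost-sample expectation and verifying that the sign flips leave the joint distribution invariant — whereas the McDiarmid step is essentially a direct computation once the correct bounded-difference constant $2\tilde{P}/L$ has been identified.
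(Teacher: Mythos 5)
Your proposal is correct: the paper offers no proof of this result, importing it verbatim from Wainwright \cite{ref-1}, and your argument --- McDiarmid's bounded-differences inequality with constants $c_i = 2\tilde{P}/L$, which yields the one-sided tail $\exp\left(-L\Lambda^2/(2\tilde{P}^2)\right)$, combined with the ghost-sample symmetrization bound $\mathbb{E}[\Phi]\le 2R_L(\mathbb{F})$ --- is precisely the standard proof given in that cited source. The interchange you flag as a potential obstacle is routine there: Jensen's inequality applied to the convex supremum functional justifies pulling the supremum inside the ghost-sample expectation, and the distributional invariance under the sign variables $\epsilon_i$ follows from the symmetry of each difference $f(\tilde{\Upsilon}_i)-f(\tilde{\Upsilon}_i')$, so nothing essential is missing from your outline.
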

Let $\mathbb{F}_n = \left\lbrace |\hat{A}(\omega)-\hat{F}|^2:\omega \in \mathfrak{N}_n\right\rbrace$. Further by considering $f$ to be a $\Lambda$-uniformly bounded and from the Assumptions \ref{assump:H1} and \ref{assump:H3} it could be concluded that
\begin{equation}\nonumber
\centering
\|\hat{A}(\omega)-\hat{F}\|_{L^{\infty}} \leq \tilde{P},~\text{for}~\tilde{P}>0.
\end{equation}
Thus $\mathbb{F}_n$ is $\tilde{P}$-uniformly bounded function. 
\begin{lemma} \label{supporting-lemma}
Let $\left\lbrace \Upsilon_m \right\rbrace_{m=1}^L$ be i.i.d samples from the distribution $\mathbb{P}$. Then for small $\Lambda>0$, with probability of at least $\displaystyle 1-2\exp\left( -\frac{L\Lambda^2}{32\tilde{P}^2}\right)$
\begin{equation}\nonumber
\centering
\displaystyle \sup_{\omega \in \mathbb{N}_n}\left| \mathfrak{L}_R(\omega) - \mathfrak{L}(\omega)\right| \leq 2R_n\left(\mathbb{F}_n\right) + \frac{\Lambda}{2}.
\end{equation}
\end{lemma}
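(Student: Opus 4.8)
The plan is to read this lemma as a standard uniform-convergence (generalization) estimate and to obtain it as a direct corollary of the quoted Rademacher-complexity concentration theorem applied to the class $\mathbb{F}_n=\{|\hat{A}(\omega)-\hat{F}|^2:\omega\in\mathfrak{N}_n\}$. First I would rewrite both losses as an empirical average and its population counterpart of the \emph{same} integrand. Writing $f_\omega(\Upsilon)=|\hat{A}(\hat{\omega}(\Upsilon))-\hat{F}(\Upsilon)|^2\in\mathbb{F}_n$, the Monte-Carlo identity gives $\mathfrak{L}(\omega)=\int_{\hat{\Omega}}f_\omega\,d\Upsilon=|\hat{\Omega}|\,\mathbb{E}_{\Upsilon\sim\mathbb{P}}[f_\omega]$ and $\mathfrak{L}_R(\omega)=|\hat{\Omega}|\cdot\frac{1}{L}\sum_{m=1}^L f_\omega(\Upsilon_m)$, so that
\begin{equation}\nonumber
\sup_{\omega\in\mathfrak{N}_n}\left|\mathfrak{L}_R(\omega)-\mathfrak{L}(\omega)\right|=|\hat{\Omega}|\,\sup_{f\in\mathbb{F}_n}\left|\frac{1}{L}\sum_{m=1}^L f(\Upsilon_m)-\mathbb{E}[f]\right|.
\end{equation}
With $\mathbb{P}$ the (uniform) sampling measure and $|\hat{\Omega}|$ normalised to one — or otherwise absorbed into the constants — this reduces the claim to a uniform deviation bound for $\mathbb{F}_n$.

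Second, I would verify the single hypothesis the theorem needs: that $\mathbb{F}_n$ is $\tilde{P}$-uniformly bounded. This is exactly the statement established immediately before the lemma, and it rests on the boundedness of the network coefficients, $\|g\|_{C(\hat{\Omega})}\le C_{\omega^*}$, together with the Legendre bounds $C_P,\,C_{P^{(i)}},\,C_{P^{\zeta}}$ of Assumption \ref{assump:H2} and the Lipschitz control of $\hat{A}$ from Assumptions \ref{assump:H1} and \ref{assump:H3}. With this in hand the theorem applies verbatim to $\mathbb{F}=\mathbb{F}_n$ and yields, for any slack $t>0$,
\begin{equation}\nonumber
\sup_{f\in\mathbb{F}_n}\left|\frac{1}{L}\sum_{m=1}^L f(\Upsilon_m)-\mathbb{E}[f]\right|\le 2R_L(\mathbb{F}_n)+t
\end{equation}
on the good event. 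Internally the theorem is the composition of McDiarmid's bounded-difference inequality — the integrand moves by at most $O(\tilde{P}/L)$ when a single sample is perturbed — with the symmetrization inequality $\mathbb{E}\big[\sup_f|\tfrac1L\sum f(\Upsilon_m)-\mathbb{E}f|\big]\le 2R_L(\mathbb{F}_n)$.

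Third, I would calibrate the constants to the exact form stated. Choosing the slack $t=\Lambda/4$ makes the concentration exponent $t^2/(2\tilde{P}^2)$ become $\Lambda^2/(32\tilde{P}^2)$, matching the stated denominator; the factor $2$ in the probability $1-2\exp(\cdot)$ accounts for the two one-sided tails of the absolute deviation (equivalently it is the harmless weakening $\exp\le 2\exp$). The resulting estimate $2R_L(\mathbb{F}_n)+\Lambda/4$ is then relaxed to $2R_L(\mathbb{F}_n)+\tfrac{\Lambda}{2}$, which is precisely the asserted bound (with $R_n(\mathbb{F}_n)$ read as $R_L(\mathbb{F}_n)$).

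The hard part will not be conceptual but a matter of bookkeeping. I expect the main obstacle to be establishing cleanly that $\mathbb{F}_n$ carries the uniform bound $\tilde{P}$ entering the inequality: one must propagate the $C(\hat{\Omega})$-bound on the learned coefficients through the Legendre sums and their fractional and integer derivatives (Assumption \ref{assump:H2}) and through the operator $\hat{A}$ (Assumptions \ref{assump:H1}, \ref{assump:H3}) to control $\|\hat{A}(\omega)-\hat{F}\|_{L^\infty}$, and then to track the normalisation $|\hat{\Omega}|$ and the slack $t$ so that the constants $\tfrac{\Lambda}{2}$ and $32\tilde{P}^2$ emerge consistently. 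Once uniform boundedness is in place, the rest is a direct invocation of the quoted theorem.
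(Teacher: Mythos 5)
Your proposal is correct and coincides with the paper's intended argument: the paper states the lemma without a written proof, treating it as an immediate corollary of the quoted Rademacher-complexity theorem from \cite{ref-1} applied to the class $\mathbb{F}_n$, whose $\tilde{P}$-uniform boundedness is asserted just beforehand via Assumptions \ref{assump:H1} and \ref{assump:H3}. Your additional bookkeeping --- taking the slack $t=\Lambda/4$ so the exponent becomes $\Lambda^2/(32\tilde{P}^2)$, weakening $\exp$ to $2\exp$, relaxing $\Lambda/4$ to $\Lambda/2$, reading $R_n(\mathbb{F}_n)$ as $R_L(\mathbb{F}_n)$, and tracking the normalisation $|\hat{\Omega}|$ --- correctly fills in the constant calibration that the paper leaves implicit.
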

\begin{theorem}\label{3st-convergence-theorem}
Let $\mathbb{F}_n= \{\hat{A}(\hat{\omega})-\hat{F}\}$, $f(x; \Upsilon)\in C(\hat{\Omega},L^1(\Omega))$ and $\displaystyle\lim_{L\to\infty} R_L(\mathbb{F}_n) = 0$ $\forall n\in \mathbb{N}$. Then with probability $1$
\begin{equation} \nonumber
\centering
\|\hat{\omega_n}-\hat{\omega_{n,L}}\|_{L^2(\hat{\Omega})} \to 0~\text{as}~n,L\to \infty.
\end{equation}
\end{theorem}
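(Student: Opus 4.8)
The plan is to reduce the target quantity $\|\hat{\omega_n}-\hat{\omega_{n,L}}\|_{L^2(\hat{\Omega})}$ to a statement about the population loss $\mathfrak{L}$, and then to control that loss at the empirical minimizer through the concentration estimate of Lemma \ref{supporting-lemma}. First I would apply Assumption \ref{assump:H1} pointwise in $\Upsilon$ and integrate over $\hat{\Omega}$. Because every element of $\mathfrak{N}_n$ is uniformly bounded by $C_{\omega^*}$ through the clipping map $h$ used in the network construction, the constant $C_{\hat{A}}$ may be taken uniform over the two minimizers, giving
\begin{equation}\nonumber
\centering
\|\hat{\omega_n}-\hat{\omega_{n,L}}\|_{L^2(\hat{\Omega})}^2 \leq C_{\hat{A}}\,\|\hat{A}(\hat{\omega_n})-\hat{A}(\hat{\omega_{n,L}})\|_{L^2(\hat{\Omega})}^2.
\end{equation}
A triangle inequality around $\hat{F}$ followed by $(a+b)^2\leq 2a^2+2b^2$ then converts the right-hand side into $2C_{\hat{A}}\big(\mathfrak{L}(\hat{\omega_n})+\mathfrak{L}(\hat{\omega_{n,L}})\big)$, recalling that $\mathfrak{L}(\omega)=\|\hat{A}(\hat{\omega})-\hat{F}\|_{L^2(\hat{\Omega})}^2$.

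The first term $\mathfrak{L}(\hat{\omega_n})$ already tends to $0$ as $n\to\infty$ by the argument inside the proof of Theorem \ref{2st-convergence-theorem}, so the main work is to control the population loss at the empirical minimizer $\hat{\omega_{n,L}}$. Here I would use the defining optimality $\mathfrak{L}_R(\hat{\omega_{n,L}})\leq \mathfrak{L}_R(\hat{\omega_n})$ together with Lemma \ref{supporting-lemma} applied twice, once at $\hat{\omega_{n,L}}$ and once at $\hat{\omega_n}$, to obtain, on the high-probability event of that lemma,
\begin{equation}\nonumber
\centering
\mathfrak{L}(\hat{\omega_{n,L}}) \leq \mathfrak{L}(\hat{\omega_n}) + 4R_L(\mathbb{F}_n) + \Lambda.
\end{equation}
Combining the two displays yields the master bound $\|\hat{\omega_n}-\hat{\omega_{n,L}}\|_{L^2(\hat{\Omega})}^2 \leq 2C_{\hat{A}}\big(2\mathfrak{L}(\hat{\omega_n})+4R_L(\mathbb{F}_n)+\Lambda\big)$, in which each term is controllable: $\mathfrak{L}(\hat{\omega_n})\to0$ as $n\to\infty$, $R_L(\mathbb{F}_n)\to0$ as $L\to\infty$ by hypothesis, and $\Lambda>0$ is arbitrary.

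Finally I would upgrade the resulting convergence in probability to almost-sure convergence. For fixed $\Lambda$ the event in Lemma \ref{supporting-lemma} fails with probability at most $2\exp(-L\Lambda^2/(32\tilde{P}^2))$, whose sum over $L$ is finite, so the Borel--Cantelli lemma guarantees that, almost surely, the bound holds for all sufficiently large $L$; hence $\limsup_{L\to\infty}\sup_{\omega\in\mathfrak{N}_n}|\mathfrak{L}_R(\omega)-\mathfrak{L}(\omega)|\leq \Lambda/2$ almost surely. Intersecting the full-measure events obtained along a countable sequence $\Lambda\downarrow 0$ then gives the claim with probability $1$. I expect the two delicate points to be (i) justifying that $C_{\hat{A}}$ is genuinely uniform, which rests entirely on the built-in uniform bound on $\mathfrak{N}_n$, and (ii) the order of the limits: since the hypothesis $R_L(\mathbb{F}_n)\to0$ is stated for each fixed $n$, the double limit must be read as sending $L\to\infty$ for fixed $n$ and then $n\to\infty$ (equivalently, coupling $L=L(n)\to\infty$ quickly enough), so that the Borel--Cantelli argument is carried out at each fixed $n$ before the approximation error $\mathfrak{L}(\hat{\omega_n})$ is driven to zero.
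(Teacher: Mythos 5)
Your proposal follows essentially the same route as the paper's proof: Assumption \ref{assump:H1} plus a triangle-type inequality reduces $\|\hat{\omega_n}-\hat{\omega_{n,L}}\|^2_{L^2(\hat{\Omega})}$ to $\mathfrak{L}(\hat{\omega_n})+\mathfrak{L}(\hat{\omega_{n,L}})$, Lemma \ref{supporting-lemma} applied twice together with the empirical optimality $\mathfrak{L}_R(\hat{\omega_{n,L}})\leq\mathfrak{L}_R(\hat{\omega_n})$ yields $\mathfrak{L}(\hat{\omega_{n,L}})\leq\mathfrak{L}(\hat{\omega_n})+4R_L(\mathbb{F}_n)+\Lambda$, and the argument of Theorem \ref{2st-convergence-theorem} drives $\mathfrak{L}(\hat{\omega_n})$ to zero. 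The only differences are minor and in your favor: the paper takes the decaying choice $\Lambda = 2L^{-\frac{1}{2}+\epsilon}$ inside the lemma so the slack vanishes with $L$ (the almost-sure claim then resting implicitly on the summable failure probabilities you make explicit via Borel--Cantelli and a countable intersection over $\Lambda\downarrow 0$), and your version correctly retains the factor $2$ from $(a+b)^2\leq 2a^2+2b^2$ and the constant $C_{\hat{A}}$, both of which the paper's displayed chain silently drops.
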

\begin{proof}
Using the Assumption \ref{assump:H1} the following is obtained:
\begin{equation}\nonumber
\centering
\begin{array}{lcl}
\displaystyle\|\hat{\omega_n}-\hat{\omega_{n,L}}\|_2^2 &\leq & \displaystyle C_{\hat{A}}\|\hat{A}(\hat{\omega_n})-\hat{A}(\hat{\omega_{n,L}})\|_2^2 \\
&\leq& \displaystyle  C_{\hat{A}} \left( \|\hat{A}(\hat{\omega_n})-\hat{F}\|^2_2+ \|\hat{A}(\hat{\omega_{n,L}})-\hat{F}\|_2^2 \right)\\
&\leq& \displaystyle \mathfrak{L}\left(\hat{\omega_n}\right) + \mathfrak{L}\left(\hat{\omega_{n,L}}\right).\\
\end{array}
\end{equation}
Now applying the Lemma \ref{supporting-lemma} with $\Lambda = 2L^{\frac{1}{2}+\epsilon}$ for $\displaystyle 0<\epsilon<\frac{1}{2}$, the following is obtained:
\begin{equation}\nonumber
\centering
\begin{array}{lcl}
\mathfrak{L}\left(\hat{\omega_{n,L}}\right) &\leq&\displaystyle \mathfrak{L}_R\left(\hat{\omega_{n,L}}\right) + 2R_L(\mathbb{F}_n) + L^{-\frac{1}{2}+\epsilon}\\
&\leq&\displaystyle  \mathfrak{L}_R\left(\hat{\omega_{n}}\right) + 2R_L(\mathbb{F}_n) + L^{-\frac{1}{2}+\epsilon}\\
&\leq&\displaystyle  \mathfrak{L}\left(\hat{\omega_{n}}\right) + 4R_L(\mathbb{F}_n) + 2L^{-\frac{1}{2}+\epsilon}.
\end{array}
\end{equation}
This results in
\begin{equation}\nonumber
\centering
\begin{array}{lcl}
\displaystyle\|\hat{\omega_n}-\hat{\omega_{n,L}}\|_2^2 &\leq & \displaystyle 2\mathfrak{L}\left(\hat{\omega_{n}}\right) + 4R_L(\mathbb{F}_n) + 2L^{-\frac{1}{2}+\epsilon}.
\end{array}
\end{equation}
Now as $L$ and $n$ tends to $\infty$ and using the Assumption \ref{assump:H3}
\begin{equation} \nonumber
\centering
\begin{array}{lcl}
\displaystyle\lim_{L\to\infty, n \to \infty}\|\hat{\omega_n}-\hat{\omega_{n,L}}\|_2^2 &\leq & \displaystyle \lim_{L\to\infty, n \to \infty} \|2\mathfrak{L}\left(\hat{\omega_{n}}\|_2^2\right) \\
&\leq&\displaystyle \lim_{L\to\infty, n \to \infty}\|\hat{\omega_n}-\omega^*\|_2^2.
\end{array}
\end{equation}
Thus it could be concluded that $\|\hat{\omega_n}-\hat{\omega_{n,L}}\|_{L^2(\Omega)} \to 0~\text{as}~n,L\to \infty$.
\end{proof}

\begin{theorem}\label{Convergence-final-theorem}
Let $f(x; \Upsilon)\in C(\hat{\Omega}, L^1(\Omega))$ and $\forall n \in \mathbb{N}$, $\displaystyle\lim_{L \to \infty}   R_L(\mathbb{F}_n) = 0$. Then, the error in the approximation 
\begin{equation} \nonumber
\centering
\lim_{n\to \infty, L\to \infty}\|z-\tilde{z}_{N,n,L}\|_{L^2(\Omega)} = 0
\end{equation}
with probability $1$. 
\end{theorem}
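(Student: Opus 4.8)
The plan is to assemble the result from the three-term triangle-inequality decomposition already recorded at the start of Section~\ref{error-analysis-of-the-proposed-method}, using the two convergence theorems just proved together with the spectral convergence of the Legendre--Galerkin scheme cited from \cite{fractional-spectral-source}. Starting from
\[
\|z-\tilde{z}_{N,n,L}\| \le \|z-\tilde{z}_{N}\| + \|\tilde{z}_{N}-\tilde{z}_{N,n}\| + \|\tilde{z}_{N,n}-\tilde{z}_{N,n,L}\|,
\]
I would show that each of the three pieces vanishes in its own limit, so that their sum vanishes, and then track the one probabilistic term to obtain the claim with probability $1$.

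The key preliminary step is a bridge from convergence of the coefficient vectors to convergence of the reconstructed functions. Since $\tilde{z}_N$, $\tilde{z}_{N,n}$ and $\tilde{z}_{N,n,L}$ are expansions in the common basis $\{P_k\}_{k=1}^{N-1}$, each pairwise difference is the expansion of the corresponding coefficient difference. Invoking Assumption~\ref{assump:H2}, which bounds $|P_k(x)|\le C_P$ uniformly (and supplies the derivative bound $C_{P^{(i)}}$ needed for the $H^1$ part), together with Cauchy--Schwarz over the $N-1$ modes, I obtain the pointwise-in-$\Upsilon$ estimate
\[
\Big\|\sum_{k=1}^{N-1}\Delta\omega_k(\Upsilon)P_k\Big\|_{H^1(\Omega)} \le C\sqrt{N-1}\,\|\Delta\omega(\Upsilon)\|_2,
\]
and taking the $L^2(\hat{\Omega})$ norm in $\Upsilon$ gives $\|\tilde{z}_N-\tilde{z}_{N,n}\| \le C\sqrt{N-1}\,\|\omega^*-\hat{\omega_n}\|_{L^2(\hat{\Omega})}$ and $\|\tilde{z}_{N,n}-\tilde{z}_{N,n,L}\| \le C\sqrt{N-1}\,\|\hat{\omega_n}-\hat{\omega_{n,L}}\|_{L^2(\hat{\Omega})}$.

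With these bridges in place the three terms are closed out directly. The first term $\|z-\tilde{z}_N\|$ is the Legendre--Galerkin spectral error, which tends to $0$ as the number of modes grows by \cite{fractional-spectral-source}; this is the piece that forces $N\to\infty$, and I would note explicitly that the stated $n,L$ limit must be accompanied by $N\to\infty$ (or a coupling $N=N(n)$ chosen increasing) for this term to vanish. The second term is controlled by Theorem~\ref{2st-convergence-theorem}, giving $\|\omega^*-\hat{\omega_n}\|_{L^2(\hat{\Omega})}\to 0$ as $n\to\infty$. The third term is controlled by Theorem~\ref{3st-convergence-theorem}, giving $\|\hat{\omega_n}-\hat{\omega_{n,L}}\|_{L^2(\hat{\Omega})}\to 0$ as $n,L\to\infty$ with probability $1$. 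Multiplying each coefficient bound by the fixed factor $C\sqrt{N-1}$ and summing, each summand vanishes in the appropriate limit, and the almost-sure character of the third limit is inherited by the total.

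The main obstacle, and the point I would treat most carefully, is the probabilistic bookkeeping in the third term. Theorem~\ref{3st-convergence-theorem} rests on Lemma~\ref{supporting-lemma}, which is only a high-probability concentration bound; upgrading it to an almost-sure statement as $L\to\infty$ requires choosing the slack $\Lambda$ so that the failure probability $2\exp(-L\Lambda^2/(32\tilde{P}^2))$ is summable in $L$ and then applying Borel--Cantelli. I would ensure this summability remains compatible with simultaneously letting $n\to\infty$, either by fixing the full-measure event on which Theorem~\ref{3st-convergence-theorem} holds and then taking $n\to\infty$ on that event, or by a diagonal choice $L=L(n)$. A second delicate point is reconciling the $L^2(\Omega)$, $L^2(\hat{\Omega})$ and $L^2(\hat{\Omega};H^1(\Omega))$ norms appearing in the statement and the intermediate estimates, and in particular controlling the growth of the bridge factor $\sqrt{N-1}$ against the decay of the spectral error as $N\to\infty$.
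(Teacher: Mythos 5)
Your proposal is correct, and its skeleton is the same as the paper's: bound the reconstructed-function error by the coefficient error using the uniform bound on $P_k$ from Assumption~\ref{assump:H2}, then invoke Theorems~\ref{2st-convergence-theorem} and~\ref{3st-convergence-theorem}. The differences are all in the direction of completeness, and each one repairs a real gap in the printed proof. The paper's argument opens with the identity $z-\tilde{z}_{N,n,L}=\sum_{k=1}^{N-1}\left(\omega_k^*-\hat{\omega}_{n,L,k}\right)P_k(x)$, which silently identifies $z$ with its Galerkin truncation $\tilde{z}_N$; that is, it drops the spectral term $\|z-\tilde{z}_N\|$ from the three-term decomposition announced at the start of Section~\ref{error-analysis-of-the-proposed-method}, even though the theorem's limit is taken only in $n$ and $L$ with $N$ fixed. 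This is exactly the mismatch you flag by insisting that the conclusion needs $N\to\infty$ (or a coupling $N=N(n)$) together with the cited spectral convergence from \cite{fractional-spectral-source}; the paper's theorem as stated is really about $\|\tilde{z}_N-\tilde{z}_{N,n,L}\|$. You also make the Cauchy--Schwarz factor $\sqrt{N-1}$ explicit, where the paper absorbs everything into a bare constant $C_P$ --- harmless at fixed $N$, but essential to your point about balancing this growth against the spectral decay if $N$ is sent to infinity. Finally, the paper asserts ``with probability $1$'' without argument, simply inheriting the same unproved qualifier from Theorem~\ref{3st-convergence-theorem}, whose proof only uses the single high-probability event of Lemma~\ref{supporting-lemma}; your Borel--Cantelli upgrade (choose $\Lambda$ so that the failure probabilities are summable in $L$, fix the resulting full-measure event, then let $n\to\infty$ on that event or diagonalize $L=L(n)$) is the missing step. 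Note in passing that the paper's choice $\Lambda=2L^{\frac{1}{2}+\epsilon}$ is evidently a typo for $\Lambda=2L^{-\frac{1}{2}+\epsilon}$, and with the corrected choice the failure probability $2\exp\left(-L\Lambda^2/(32\tilde{P}^2)\right)=2\exp\left(-L^{2\epsilon}/(8\tilde{P}^2)\right)$ is indeed summable in $L$, so your plan goes through; your remark about reconciling the $L^2(\Omega)$, $L^2(\hat{\Omega})$ and $L^2(\hat{\Omega};H^1(\Omega))$ norms likewise addresses a genuine looseness in the paper's statement versus its Section~\ref{error-analysis-of-the-proposed-method} decomposition. In short: same route, but your version proves the theorem the paper only states modulo the identification $z=\tilde{z}_N$, and it justifies the almost-sure claim the paper leaves unsupported.
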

\begin{proof}
By using the solution in \eqref{trial-solution-for-parametric} and the Assumptions \ref{assump:H2} the error in the approximation is
\begin{equation} \nonumber
\centering
\begin{array}{lcl}
\displaystyle \int_{\Omega}\int_{I}\left|z-\tilde{z}_{N,n,L}\right|^2 dx d\Upsilon&=& \displaystyle \int_{\hat{\Omega}}\int_{I}\left|\sum_{k=1}^{N-1}\left(\omega_k^*-\hat{\omega_{n,L,k}}\right)P_k(x)\right|^2 dx d\Upsilon\\
&\leq&\displaystyle C_P\left\|\left(\omega^*-\hat{\omega_{n,L}}\right)\right\|^2_{L^2(\hat{\Omega})}.
\end{array}
\end{equation}
From the Theorem \ref{2st-convergence-theorem} and \ref{3st-convergence-theorem},  it can be concluded that $\left\|\left(\omega^*-\hat{\omega_{n,L}}\right)\right\|^2_{L^2(\hat{\Omega})}$ tends to $0$ as $n\to \infty$ and $L\to \infty$. Thus $\|z-\tilde{z}_{N,n,L}\|_{L^2(\hat{\Omega})}\to 0$ as $n\to \infty$ and $L\to \infty$.
\end{proof}
\section{Extension of the proposed method to multidimensional FDEs}\label{Extension of the Proposed method to multidimensional FDE}
This section presents the extension of the proposed method to the multidimensional function. That is when $z$ depends on many independent variables. The problem which we consider in this section is of the form
\begin{equation}\label{2d-problem-general-form-nonlinear}
\centering
\begin{array}{lcl}
_0^CD^{\zeta}_{x_1} z(x_1,\cdots,x_n) + N(z) = f(x_1,\cdots,x_n; \Upsilon),
\end{array}
\end{equation}
with 
\begin{equation}\nonumber
\centering
z = 0~\text{in}~\partial \Omega^n,
\end{equation}
where the $x_1,\cdots,x_n$ are the independent variables where $x_1$ denotes the time and the other independent variable denote the space, $N$ is a non-linear operator which is chosen such that it satisfies the Assumptions \ref{assump:H1}, \ref{assump:H2} and \ref{assump:H3} and $z$, $f$ are chosen from appropriate spaces. The weak formulation of the equation \eqref{2d-problem-general-form-nonlinear} for $\zeta \in (0,1)$ and $z\in H(\Omega^n)$, where $H$ is some suitable space such as fractional Sobolev space can be obtained as
\begin{equation}\label{weak-form-for-FPDE}
\centering
\begin{array}{lcl}
\displaystyle \int_{\Omega^n}\left( _0^CD^{\zeta}_{x_1} z(x_1,\cdots,x_n; \Upsilon) + N(z)\right) v dx_1\cdots dx_n = \displaystyle \int_{\Omega^n}  f(x_1,\cdots,x_n; \Upsilon) v dx_1\cdots dx_n,~\forall v\in H.
\end{array}
\end{equation}
The Legendre-Galerkin approximation for this system is given by
\begin{equation} \nonumber
\centering
z_N(x_1,\cdots,x_n;\Upsilon) = \sum_{k_{x_1}=1}^{N-1}\cdots\sum_{k_{x_n}=1}^{N-1} \omega_{k_{x_1},\cdots,k_{x_n}}(\Upsilon) P_{k_{x_1}}(x_1)\cdots P_{k_{x_n}}(x_n),
\end{equation}
where $\omega's$ are the coefficients obtained from the neural network. This could be used in the weak form \eqref{weak-form-for-FPDE} to obtain the loss function denoted by $\hat{\mathfrak{L}}$. Subsequently, the targeted coefficients $\omega^*$ can be obtained as
\begin{equation}\nonumber
\centering
\omega^* = \argmin_{\omega\in C(\Omega^n)}\hat{\mathfrak{L}}(\hat{\omega}).
\end{equation} 
This continuous residual minimization problem can be converted to an equivalent form of a discrete residual minimization problem as 
\begin{equation}\nonumber
\centering
\omega^*_n = \argmin_{\omega\in \hat{\mathfrak{N}}_n}\hat{\mathfrak{L}}_R(\hat{\omega}),
\end{equation}
where $\hat{\mathfrak{N}}_n$ contains all the neural networks whose output is of dimension $(N-1)^2$ with $n$ network parameters and $\hat{\mathfrak{L}}_R$ is the discrete loss function.
An example of the FDE satisfying the Assumptions \ref{assump:H1}, \ref{assump:H2} and \ref{assump:H3} is given by
\begin{equation} \nonumber
\centering
\begin{array}{lcl}
_0^CD^{\zeta}_t z(x_1,\cdots,x_n) - \hat{v} \Delta z +  z \nabla z  = f(x_1,\cdots,x_n),~\text{for}~z\in H^{\zeta}(\Omega^{n}),
\end{array}
\end{equation}
which is the non-linear advection-diffusion equation. The convergence of the proposed method for the multidimensional FDE holds under the Assumptions \ref{assump:H1}, \ref{assump:H2}, and \ref{assump:H3}. Further, the proof is similar to the univariate FDE. 
\section{Numerical experiments}\label{Numerical experiments}
This section validates the theoretical results on numerical examples. For this, we have considered linear and non-linear problems. The function $f$ on the right-hand side is constructed such that the solution $z$ is known for obtaining the numerical error in our approach. To obtain the numerical error as explained in theory, we have used the $ L^2$ error denoted by $L^2_{Te}$. We have also used $L^{\infty}$-error denoted by $L^{\infty}_{Te}$. The $Te$ in the suffix is to denote that the error is calculated in testing. Further, we present the convergence plots and the results obtained at particular random parameter values. For all the networks we have used a neural network with one hidden layer with $tanh$ activation function. 

\subsection{Linear problem}
This section presents the results obtained from a linear FDE. For this, we consider the FDE of the form
\begin{equation}\label{linear-1-d-example-eq}
\centering
\begin{array}{lcl}
_0^CD_x^{\zeta} z(x) + z^{(1)}(x) &=& f(x; \Upsilon), x \in [0,1],\\
u(0)&=&0,\\
u(1)&=&0.
\end{array}
\end{equation}
The function $f$ is given by
\begin{equation}\nonumber 
\centering
\begin{array}{lcl}
\displaystyle f(x; \Upsilon) = -\frac{m_1 m_2 \Gamma (m_2) (\zeta+(m_2+1) (x-1)) x^{m_2-\zeta}}{\Gamma (-\zeta+m_2+2)}+m_1 m_2 (1-x) x^{m_2-1}-m_1 x^{m_2},
\end{array}
\end{equation}
where $\Upsilon$ is used to represent $m_1$ and $m_2$ which are random numbers generated from the $U[3,5]$ and $x$ represents time. Thus the exact solution is $z(x;\Upsilon) = m_1 (1-x) x^{m_2}$. The fractional order $\zeta$ is chosen to be $1.5$. Initially, we obtain the quadrature nodes and weights by choosing the degree to be $10$. The value of $P$ considered to be $4$, $8$, $16$, $32$ and $64$. Similarly the value of $L$ is considered to be at $10$, $50$, $100$, $500$, and $1000$. The solution is obtained at all these values of $L$ and $P$, and the error in the approximation is obtained. The $L^2_{Te}$ and $L^{\infty}_{Te}$ error obtained on varying the number of neurons ($n$) in each layer and the number of sampling points ($L$) is given in Table \ref{Linear-example-1-error-table}. The graphical results of the convergence of the error as the number of neurons and the number of sampling points increases is demonstrated in Figure \ref{linear-1-d-example-plots}. These graphs indicate that upon increasing the number of sampling points $(L)$, the approximation by using the proposed method gets much closer to the unknown function, and the error gets reduced.
\begin{table}[H]
\centering
\begin{tabular}{|c|c|ccccc|}
\hline
\multirow{2}{*}{$L$}&$n$&\multirow{2}{*}{4}&\multirow{2}{*}{8}&\multirow{2}{*}{16}&\multirow{2}{*}{32}&\multirow{2}{*}{64}\\
\cline{2-2}
&Error&&&&&\\
\hline
\multirow{2}{*}{10}
&$L^2_{Te}$&$3.48e-03$&$9.25e-03$&$3.59e-03$&$4.34e-03$
&$3.75e-03$\\
&$L^{\infty}_{Te}$&$1.59e-02$&$7.98e-02$&$1.91e-02$&$2.68e-02$&$2.54e-02$\\
\hline
\multirow{2}{*}{50}
&$L^2_{Te}$&$3.55e-03$&$1.65e-03$&$1.24e-03$&$1.52e-03$&$1.53e-03$\\
&$L^{\infty}_{Te}$&$2.28e-02$&$1.03e-02$&$1.18e-02$&$1.30e-02$&
$1.13e-02$\\
\hline
\multirow{2}{*}{100}
&$L^2_{Te}$&$3.41e-03$&$9.05e-04$&$6.14e-04$&$6.24e-04$&$6.75e-04$\\
&$L^{\infty}_{Te}$&$1.60e-02$&$6.26e-03$&$2.82e-03$&$3.66e-03$&$3.71e-03$\\
\hline
\multirow{2}{*}{500}
&$L^2_{Te}$&$4.73e-04$&$4.16e-04$&$2.80e-04$&$1.15e-04$&$3.12e-04$\\
&$L^{\infty}_{Te}$&$2.16e-03$&$2.13e-03$&$2.01e-03$&$2.44e-03$&$2.64e-03$\\
\hline
\multirow{2}{*}{1000}
&$L^2_{Te}$&$2.70e-03$&$5.62e-04$&$2.37e-04$&$3.83e-04$&$4.38e-04$\\
&$L^{\infty}_{Te}$&$1.06e-02$&$3.83e-03$&$1.91e-03$&$2.13e-03$&$3.33e-03$\\
\hline
\end{tabular}
\caption{$L^{2}_{Te}$ and $L^{\infty}_{Te}$ error obtained as the number of parameters in each layer ($n_p$) and the number of points ($L$) from the domain $\hat{\Omega}$ is increased. \label{Linear-example-1-error-table}}
\end{table}

\begin{figure}[H]
     \begin{subfigure}[b]{0.5\textwidth}
     \includegraphics[width=1\textwidth, height=0.7\textwidth]{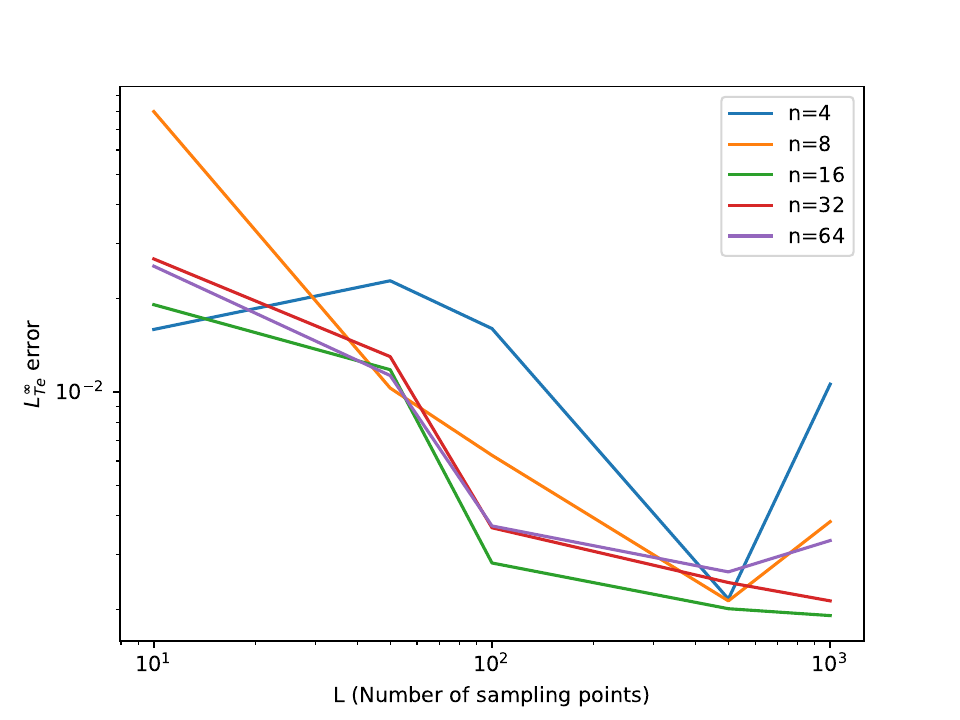}
     \caption{Convergence of $L^{\infty}_{Te}$ error as number of sampling points $L$ from the domain $\hat{\Omega}$ increases at different number of neurons $n$. \label{L_inf_hat_Omega_L}}
	 \end{subfigure}
	 \begin{subfigure}[b]{0.5\textwidth}
     \includegraphics[width=1\textwidth, height=0.7\textwidth]{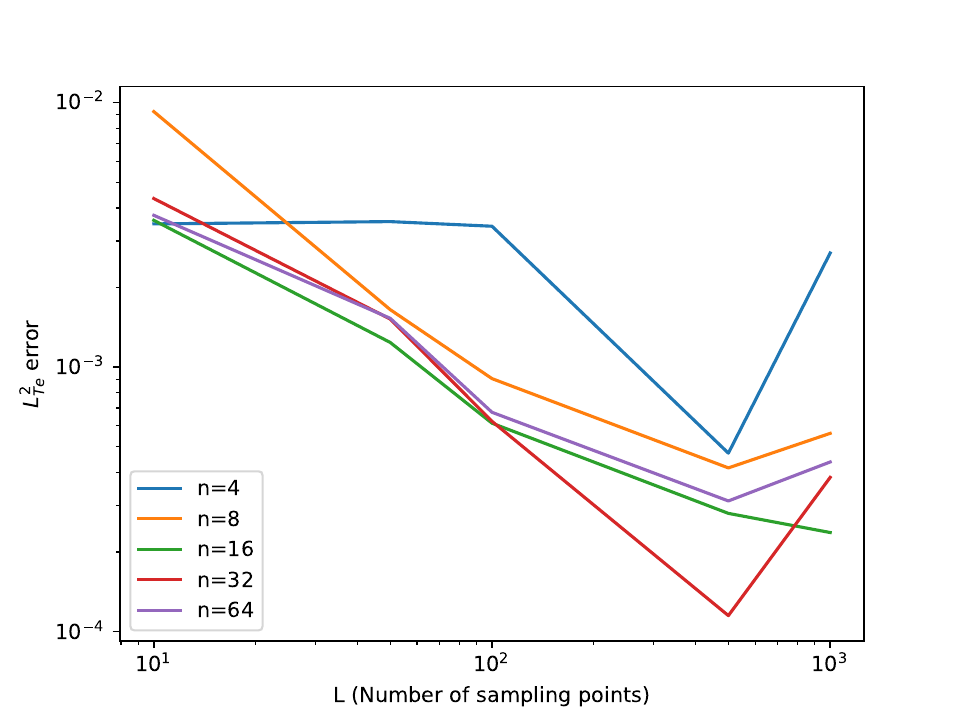}
     \caption{Convergence of $L^{2}_{Te}$ error as number of sampling points $L$ from the domain $\hat{\Omega}$ increases at different number of neurons $n$. \label{L_2_hat_Omega_L}}
	 \end{subfigure}
	 \caption{Numerical results on convergence for the parametric linear problem \eqref{linear-1-d-example-eq}.\label{linear-1-d-example-plots}}
\end{figure}

\subsection{Heat equation}
This section presents the application of the proposed method to the parametrized time-fractional heat equation with time and space denoted by $x_1$ and $x_2$ respectively. For this, we consider the following equation:
\begin{equation}\label{Heat equation}
\centering
\begin{array}{lcl}
_0^CD^{\zeta}_{x_1}z(x_1,x_2) &=& \displaystyle \hat{v} z_{x_1,x_1} + f(x_1,x_2; \Upsilon),~(x_1,x_2) \in [0,T]\times[0,X],
\end{array}
\end{equation}
where the function $f$ is chosen to be 
\begin{equation}\nonumber
\centering
\begin{array}{lcl}
f(x_1,x_2)&=&\displaystyle m_1 x^{m_2}c \left(-\frac{x_2 x_1^{-\zeta} \Gamma (m_2+1) (X-x_2) (T (\zeta-m_2-1)+m_2 x_1+x)}{\Gamma (-\zeta+m_2+2)}+2 T-2 x_1\right).
\end{array}
\end{equation}
The exact solution of this system is given by 
\begin{equation}\nonumber
\centering
u(x_1,x_2) =  m_1 c (T-x_1) (X-x_2) x_2 x_1^{m_2}.
\end{equation}
The random parameters $m_1$ and $m_2$ are assumed to follow $U[5,7]$, $X=1$, $T=1$ and $C=5$. The fractional order is chosen to be $\zeta=0.7$. We use the Gauss-Legendre quadrature of degree 10 to approximate the integral in the loss function. Further the number of sampling points $L$ is chosen to be $10$, $100$, $200$ and $300$. The weak form of the equation is formulated, and the network is trained. The results obtained by using the proposed approach are presented in Figure \ref{Heat equation-example-plots}. The plots are obtained by fixing the number of neurons in each layer to be $4$. The convergence of the solution as the number of sampling points is increased is given in Figure \ref{L_2_hat_Omega_L_heat} by using the $L^{2}_{Te}$ error and in Figure \ref{L_inf_hat_Omega_L_heat} by using the $L^{\infty}_{Te}$ error. It could be seen that the error reduces as the number of sampling points $L$ increases from $10$ to $300$. This verifies the derived theoretical results. The approximated solution of the FDE at a particular value of $m_1$ and $m_2$ is presented in Figure \ref{particular_t_heat}. These results show that the proposed method has well approximated the error to the order of $10^{-3}$. Further, the solution obtained in training and testing along with the exact solution and its error are presented in Figure \ref{Train_plot_activation_heat} and \ref{Test_plot_activation_heat}. These plots infer that the error obtained is of order $10^{-3}$ for this problem. For this problem, we trained the network for $500$ epochs. We used the Adam optimizer for the first $300$ epoch, and the rest were performed using the LBFGS optimizer.
\begin{figure}[H]
\begin{subfigure}[b]{0.45\textwidth}
\includegraphics[width=0.9\textwidth, height=0.7\textwidth]{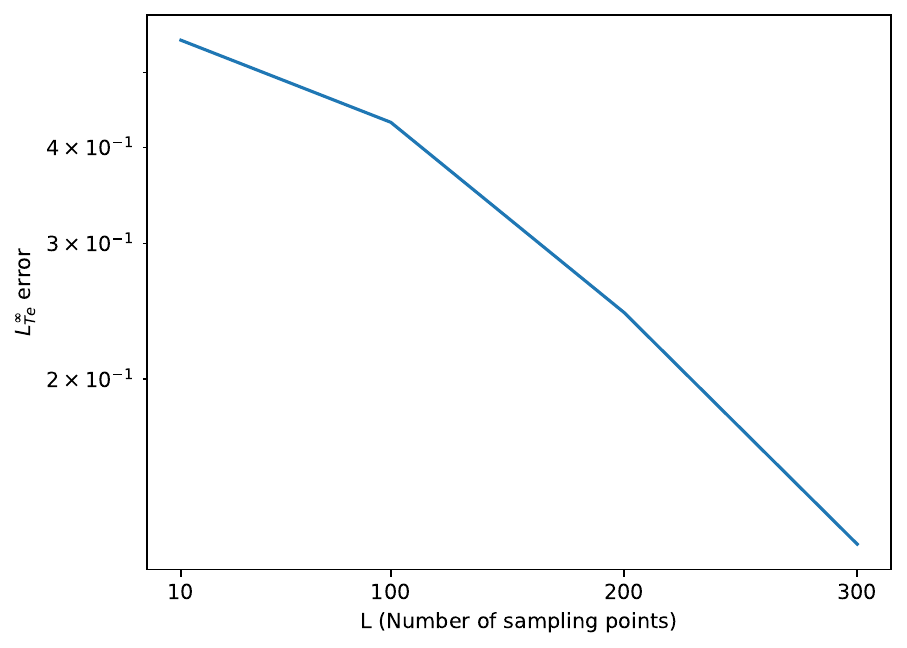}
     \caption{Convergence of $L^{\infty}_{Te}$ error as number of sampling points $L$ from the domain $\hat{\Omega}$ increases. \label{L_inf_hat_Omega_L_heat}}
	 \end{subfigure}
	 \begin{subfigure}[b]{0.45\textwidth}
     \includegraphics[width=0.9\textwidth, height=0.7\textwidth]{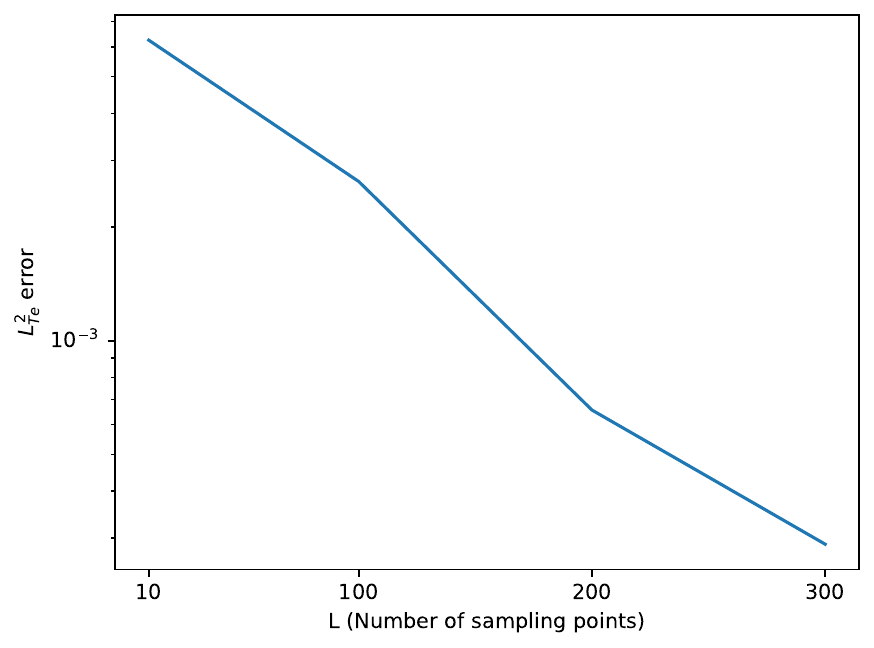}
     \caption{Convergence of $L^{2}_{Te}$ error as number of sampling points $L$ from the domain $\hat{\Omega}$ increases. \label{L_2_hat_Omega_L_heat}}
	 \end{subfigure}
	 \begin{subfigure}[b]{1\textwidth}
     \includegraphics[width=1\textwidth, height=0.3\textwidth]{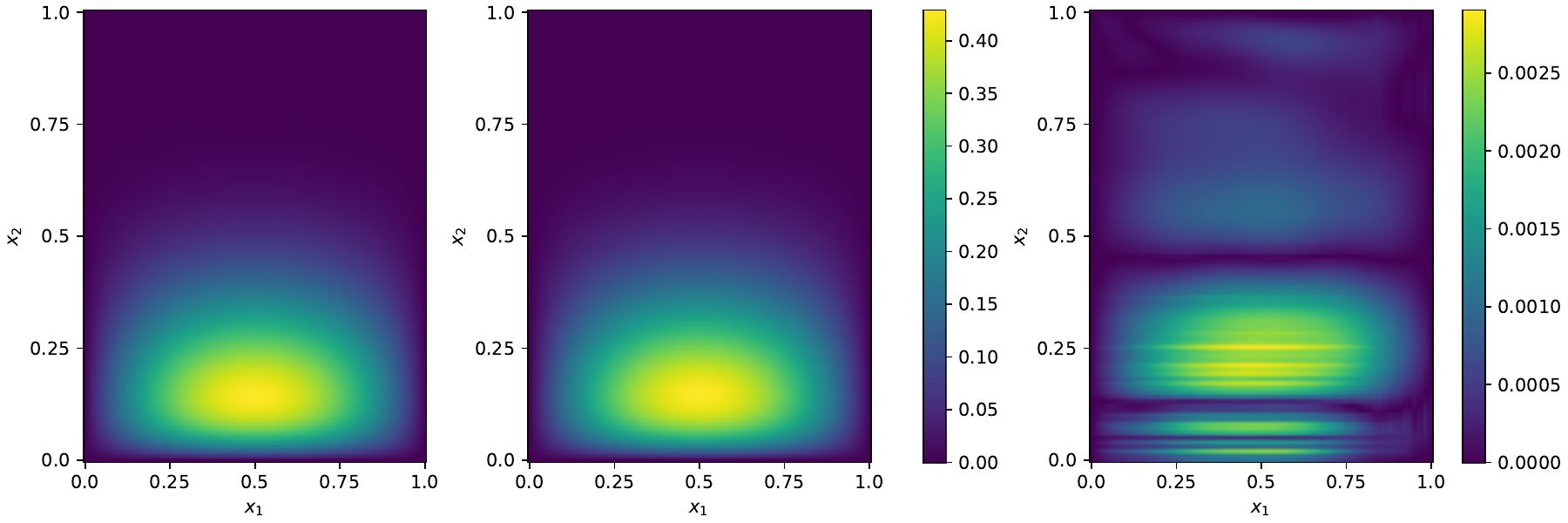}
     \caption{Approximate solution (left), exact solution (center) and absolute error (right) for a particular pair of $m_1$ and $m_2$ during training.\label{Train_plot_activation_heat}}
	 \end{subfigure}
	 \caption{Numerical results obtained for the parametric heat equation given in equation \eqref{Heat equation} for $X=1$, $T=1$, and $C=5$.\label{Heat equation-example-plots}}
\end{figure}

\begin{figure}[H]
	 \begin{subfigure}[b]{1\textwidth}
     \includegraphics[width=1\textwidth, height=0.6\textwidth]{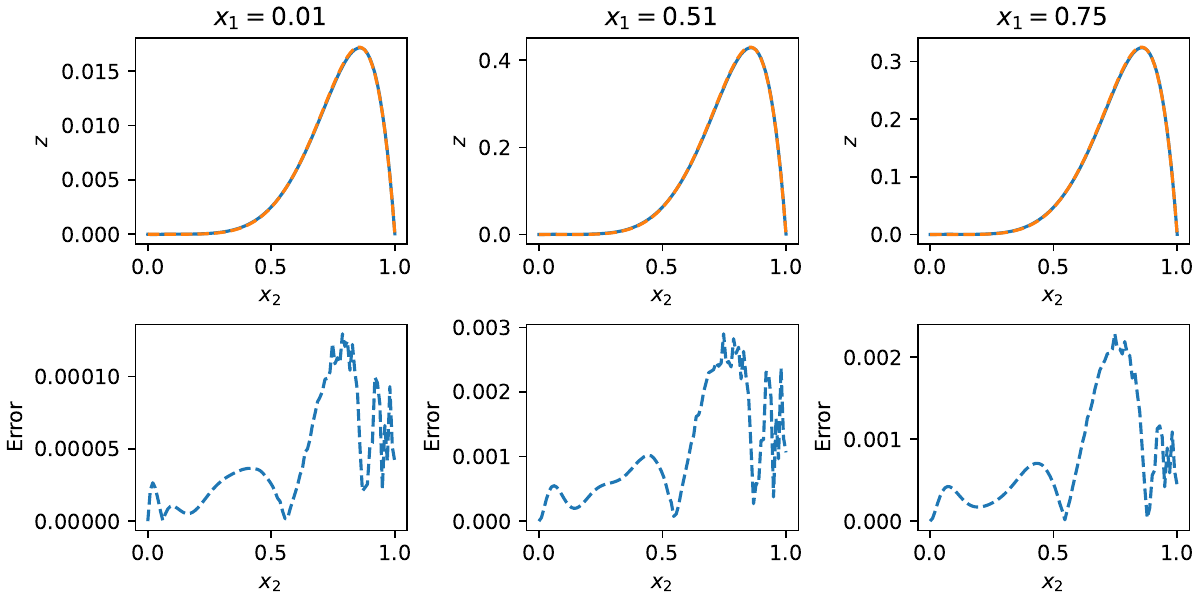}
     \caption{Approximated solution (orange) obtained at various values of $x_1$ along with the exact solution (blue) in the top. The absolute error at the specified $x_1$ in the bottom. \label{particular_t_heat}}
	 \end{subfigure}
	 \begin{subfigure}[b]{1\textwidth}
     \includegraphics[width=1\textwidth, height=0.3\textwidth]{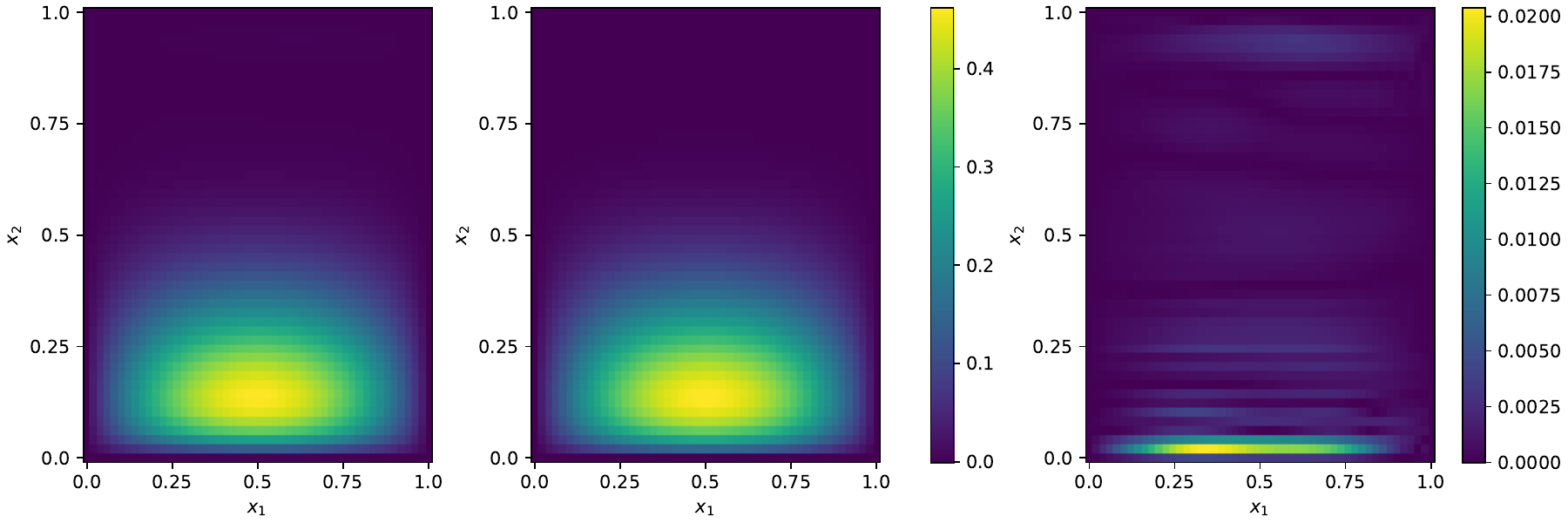}
     \caption{Approximate solution (left), exact solution (center) and absolute error (right) for a particular pair of $m_1$ and $m_2$ during testing for \eqref{Heat equation}.\label{Test_plot_activation_heat}}
	 \end{subfigure}
	 \caption{Numerical results obtained for the parametric heat equation given in equation \eqref{Heat equation} for $X=1$, $T=1$, and $C=5$.\label{Heat equation-example-plots-1}}
\end{figure}

As a demonstration for long-range, we consider the values of $X=5$ and $T=1$. The proposed method is used to solve the system in this domain, and the results obtained are tabulated in Figure \ref{Heat equation-extended-example-plots-2} and \ref{Heat equation-extended-example-plots-1}. We used the Adam optimizer for the training. The results in Figure \ref{L_2_hat_Omega_L_Heat_Extended} and \ref{L_inf_hat_Omega_L_Heat_Extended} demonstrate that increasing the number of neurons improves the accuracy of the method. This validates the Theorem \ref{Convergence-final-theorem}. Further, the loss value graph as the number of epochs increases is plotted in Figure \ref{Loss_plot_different_activation_heat_extended_a_constant}. This further validates that as the neuron increases, the training loss minimizes.

\begin{figure}[H]
    \begin{subfigure}[b]{1\textwidth}
         \includegraphics[width=1\textwidth, height=0.3\textwidth]{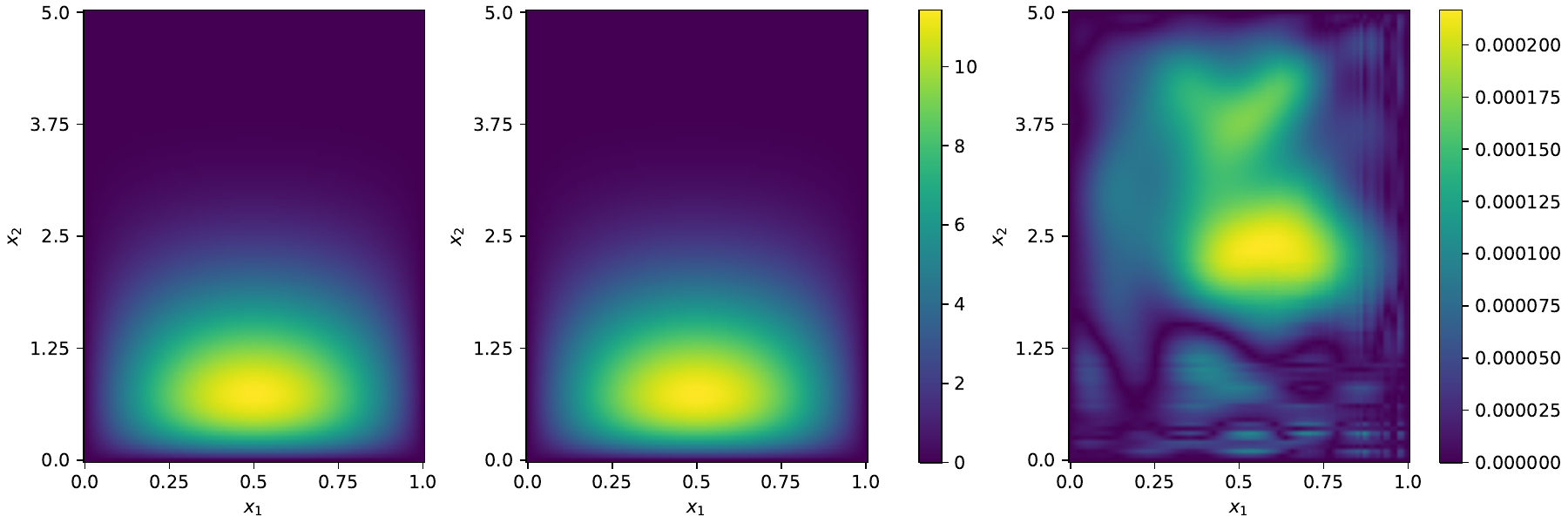}
     \caption{Approximate solution (left), exact solution (center) and absolute error (right) for a particular pair of $m_1$ and $m_2$ during testing for \eqref{Heat equation}.\label{Test_plot_activation_heat_extended}}
    \end{subfigure}
     \caption{Numerical results obtained for the parametric heat equation given in equation \eqref{Heat equation} for $X=5$, $T=1$, and $C=5$.\label{Heat equation-extended-example-plots-2}}
\end{figure}
\begin{figure}[H]
    \begin{subfigure}[b]{0.45\textwidth}
     \includegraphics[width=1\textwidth, height=0.7\textwidth]{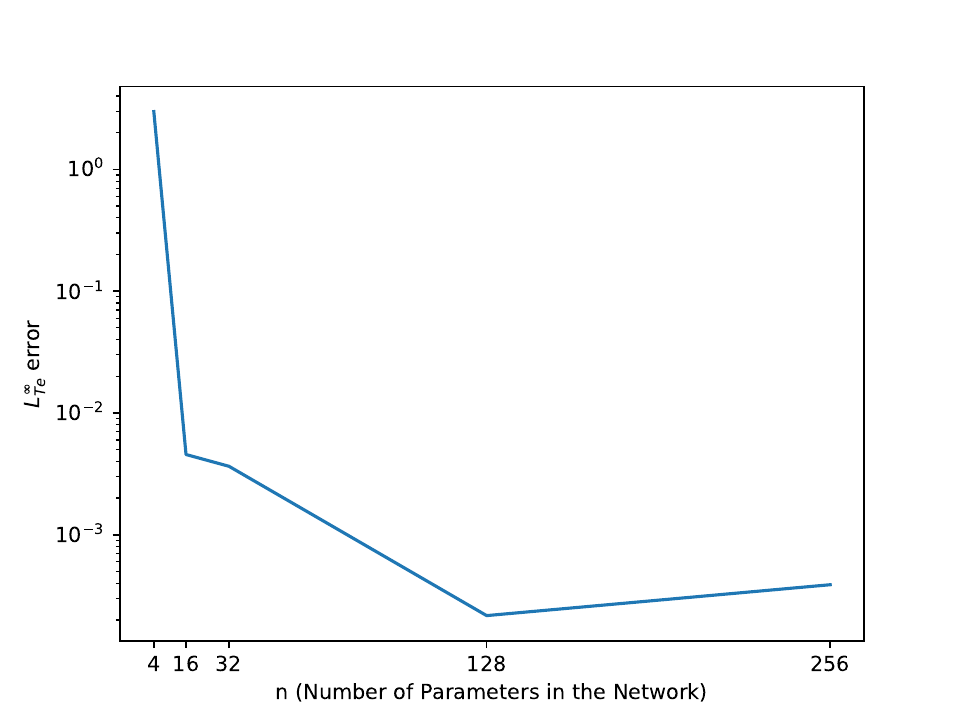}
     \caption{Convergence of $L^{\infty}_{Te}$ error as number of parameters in the neural network $n$ increases. \label{L_inf_hat_Omega_L_Heat_Extended}}
	 \end{subfigure}
	 \begin{subfigure}[b]{0.45\textwidth}
     \includegraphics[width=1\textwidth, height=0.7\textwidth]{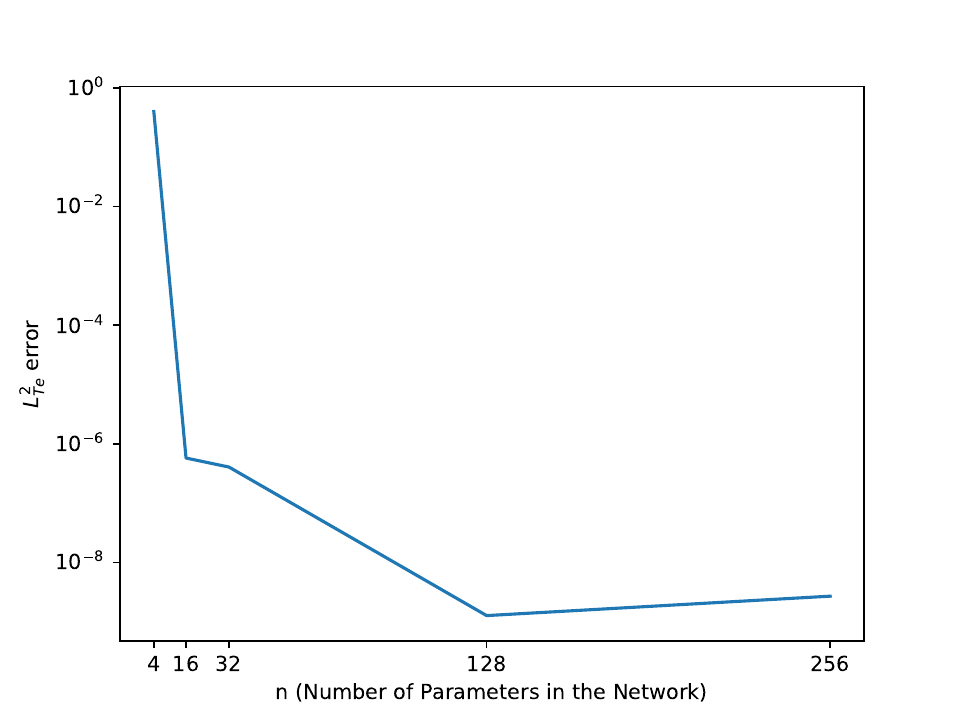}
     \caption{Convergence of $L^{2}_{Te}$ error as the number of parameters in the neural network $n$ increases. \label{L_2_hat_Omega_L_Heat_Extended}}
	 \end{subfigure}
      \begin{subfigure}[b]{0.5\textwidth}
     \includegraphics[width=1\textwidth, height=0.7\textwidth]{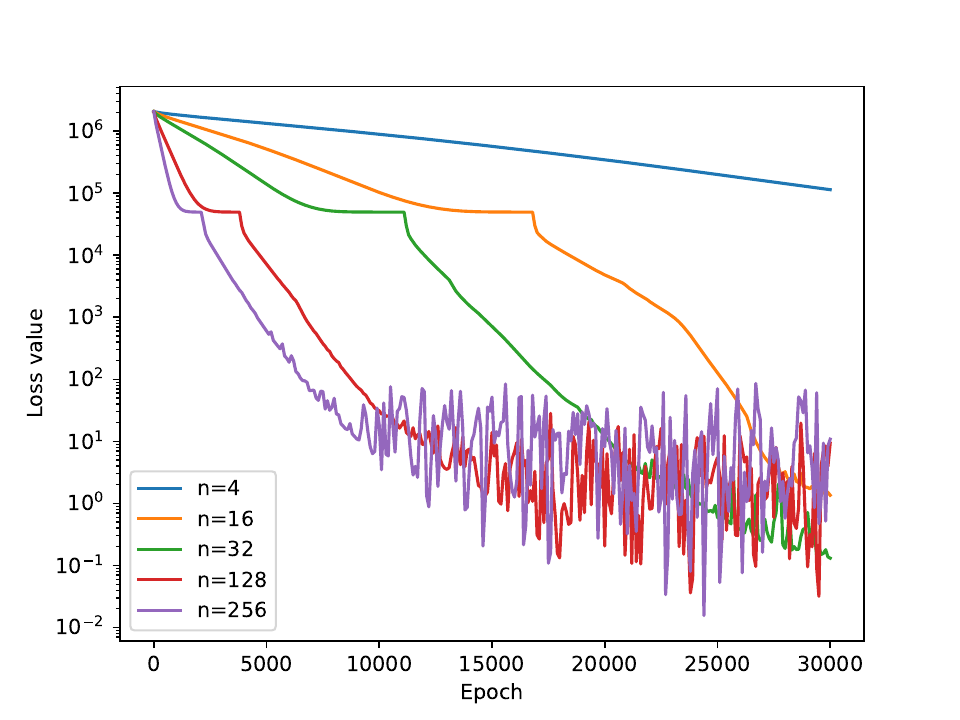}
     \caption{Loss value as the number of epochs increases. (Plotted for every $100^{th}$ epoch) \label{Loss_plot_different_activation_heat_extended_a_constant}}
     \end{subfigure}

	 \caption{Numerical results obtained for the parametric heat equation given in equation \eqref{Heat equation} for $X=5$, $T=1$, and $C=5$.\label{Heat equation-extended-example-plots-1}}
\end{figure}

\subsection{Nonlinear Advection-Diffusion Equation}
This section demonstrates the proposed method for the nonlinear advection-diffusion equation with time and space denoted by $x_1$ and $x_2$ respectively. For this, we consider the following parametrized time-FDE given by
\begin{equation}\label{Nonlinear Advection-Diffusion Equation}
\centering
\begin{array}{rcl}
_0^CD^{\zeta}_{x_1} z(x_1,x_2) - \hat{v} z_{x_2 x_2} +  \mu z_{x_2} & =& f(x_1,x_2; \Upsilon),~(x_1,x_2) \in \Omega = [0,T] \times[0,X],\\
z(x_1,x_2) &=& 0,~(x_1,x_2)\in \partial \Omega,
\end{array}
\end{equation}
where
\begin{equation}\nonumber
\centering
\begin{array}{lcl}
f(x_1,x_2; \Upsilon) &=& \displaystyle C x_1^{m_2} \left(-\frac{x_1^{-\zeta} \Gamma(m_2+1) (T (\zeta-m_2-1)+(m_2+1) x_1) \sin(m_1 x_2 (X-x_2))}{\Gamma(-\zeta+m_2+2)}\right. \\
&&+\mu x_1^{m_2} (x_1-T)^2 \sin^2(m_1 x_1 (X-x_2))+m_1 v (T-x_1) \left(m_1 (X-2 x_2)^2\right. \\
&&\left.\sin(m_1 x_2 (X-x_2))+2 \cos (m_1 x_2 (X-x_2))\right)\bigg{)}.
\end{array}
\end{equation}
The exact solution of this FDE is $z(x_1,x_2) = C(T - x_1) x_1^{m_2} \sin(m_1 (X - x_2) x_2)$. We chose the values of $\hat{v}=1.0$ and $\mu = 0.1$. Further, we take the values of $m_1$ and $m_2$ from $U[1,1.5]$ and set $X=T=1$ and $C=20$. The fractional order is taken to be $0.7$. The numerical integration in the domain of $x_1$ and $x_2$ is performed using the Gauss-Legendre quadrature of degree $20$. The value of $L$ is taken to be $10$, $100$ and $1000$. The value of $n$ (number of parameters in each layer) is $8$, $16$, $32$, $64$, $128$ and $256$. For this problem, we trained the network for $5000$ epochs. We used the Adam optimizer for the first $3000$ epoch, and the rest were performed using the LBFGS optimizer. The error obtained upon varying the number of neurons and the sampling points is presented in Table \ref{Nonlinear Advection-Diffusion Equation-error-table}. Further, we also present the error obtained using various activation functions in Table \ref{Nonlinear Advection-Diffusion Equation-Error-vs-activation-fn}. This table shows that the $\tanh$ activation function performs better than the other functions. Further, it was also noticed that the Relu function performs the worst since it is not a bounded activation function. Also, we present the graphical validation for the approaches in Figure \ref{Nonlinear Advection-Diffusion Equation-example-plots}. The reduction of loss as the number of epochs increases is presented in Figure \ref{Loss_plot_different_activation_advection_diffusion}. The approximate solution obtained at different values of $x_1$ is plotted in Figure \ref{particular_t_advection_convection}. This infers that the error obtained is of order $10^{-2}$. We also present the results obtained during training in Figure \ref{Train_plot_advection_diffusion} and during testing in Figure \ref{Test_plot_advection_diffusion}.

\begin{table}[H]
\centering
\begin{tabular}{|c|c|ccc|}
\hline
\multirow{2}{*}{$n$}&$L$&\multirow{2}{*}{10}&\multirow{2}{*}{100}&\multirow{2}{*}{1000}\\
\cline{2-2}
&Error&&&\\
\hline
\multirow{2}{*}{8}
&$L^2_{Te}$&$5.49e-03$&$7.65e-05$&$1.68e-04$\\
&$L^{\infty}_{Te}$&$1.61e+00$&$9.63e-02$&$2.16e-01$\\
\hline
\multirow{2}{*}{16}
&$L^2_{Te}$&$6.12e-03$&$8.78e-05$&$8.60e-05$\\
&$L^{\infty}_{Te}$&$8.71e-01$&$1.17e-01$&$1.34e-01$\\
\hline
\multirow{2}{*}{32}
&$L^2_{Te}$&$5.58e-03$&$8.32e-05$&$2.32e-04$\\
&$L^{\infty}_{Te}$&$7.96e-01$&$1.00e-01$&$2.11e-01$\\
\hline
\multirow{2}{*}{64}
&$L^2_{Te}$&$3.00e-03$&$6.87e-04$&$1.13e-03$\\
&$L^{\infty}_{Te}$&$5.79e-01$&$2.56e-01$&$3.57e-01$\\

\hline
\multirow{2}{*}{128}
&$L^2_{Te}$&$5.69e-03$&$1.70e-03$&$7.56e-04$\\
&$L^{\infty}_{Te}$&$6.65e-01$&$3.49e-01$&$3.63e-01$\\

\hline
\multirow{2}{*}{256}
&$L^2_{Te}$&$1.69e-03$&$7.52e-04$&$9.58e-04$\\
&$L^{\infty}_{Te}$&$4.06e-01$&$3.42e-01$&$3.28e-01$\\


\hline
\end{tabular}
\caption{$L^{2}_{Te}$ and $L^{\infty}_{Te}$ error obtained as the number of parameters in each layer ($n_p$) and the number of points ($L$) from the domain $\hat{\Omega}$ increases. ($X=T=1$, $C=20$) \label{Nonlinear Advection-Diffusion Equation-error-table}.}
\end{table}
\begin{figure}[H]
     \begin{subfigure}[b]{1\textwidth}
     \includegraphics[width=1\textwidth, height=0.6\textwidth]{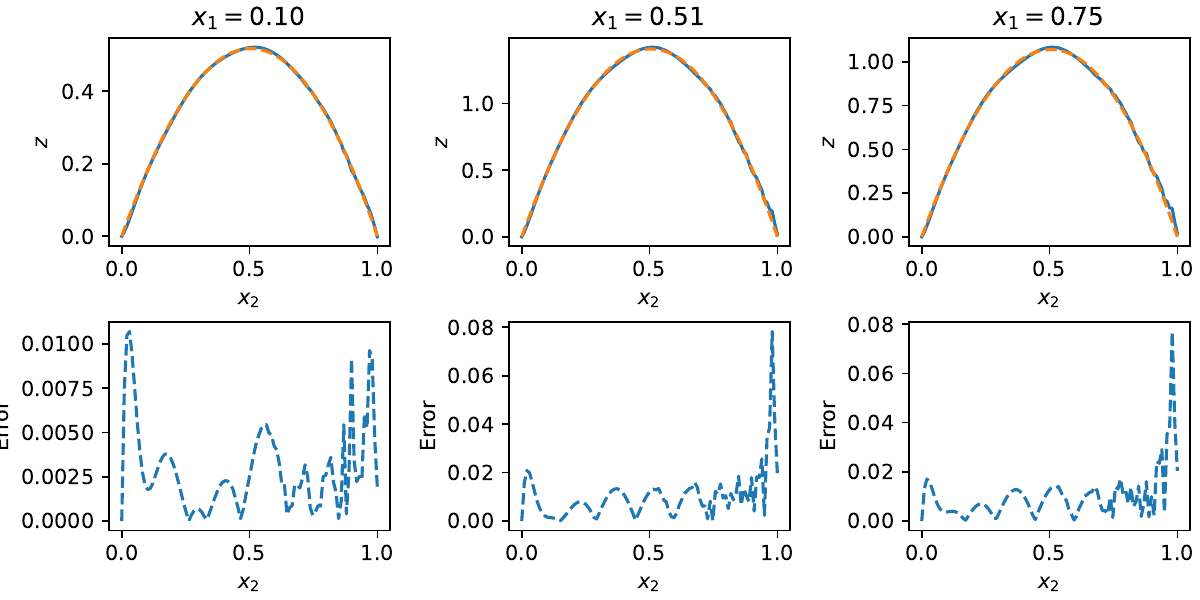}
     \caption{Approximated solution (orange) obtained at various values of $x_1$ along with the exact solution (blue) in the top. The absolute error at the specified $x_1$ in the bottom.  \label{particular_t_advection_convection}}
	 \end{subfigure} 
	 \begin{subfigure}[b]{1\textwidth}
     \includegraphics[width=1\textwidth, height=0.3\textwidth]{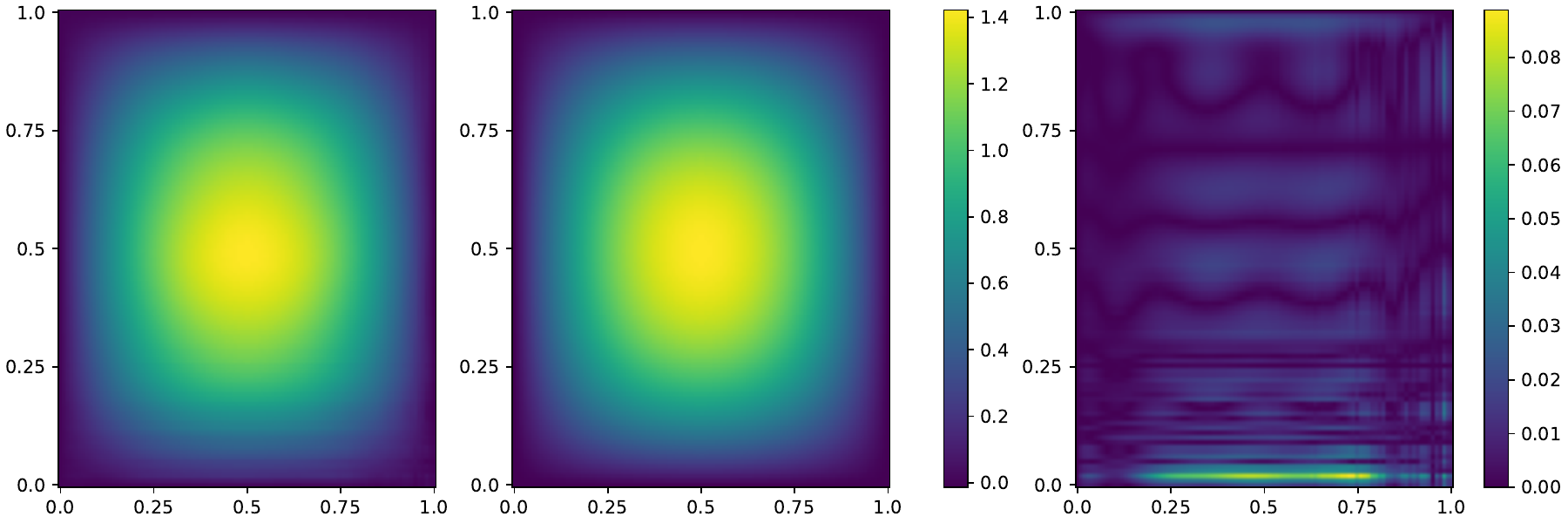}
     \caption{Approximate solution (left), exact solution (center) and absolute error (right) for a particular pair of $m_1$ and $m_2$ during testing. \label{Test_plot_advection_diffusion}}
	 \end{subfigure}
     \begin{subfigure}[b]{0.5\textwidth}
     \includegraphics[width=1\textwidth, height=0.7\textwidth]{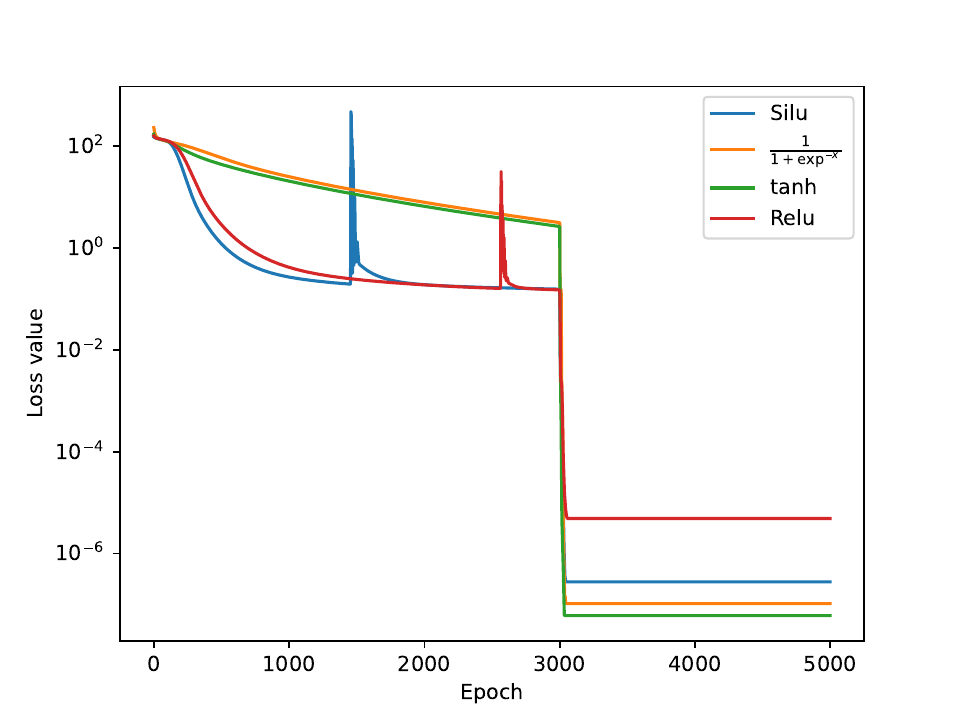}
     \caption{Loss value as the number of epoch increases. \label{Loss_plot_different_activation_advection_diffusion}}
	 \end{subfigure}
	 \caption{Numerical results obtained for the parametric nonlinear advection-diffusion problem in equation \eqref{Nonlinear Advection-Diffusion Equation} at $X=1$, $T=1$, and $C=20$.\label{Nonlinear Advection-Diffusion Equation-example-plots-1}}
\end{figure}

\begin{figure}[H]
	 \begin{subfigure}[b]{1\textwidth}
     \includegraphics[width=1\textwidth, height=0.3\textwidth]{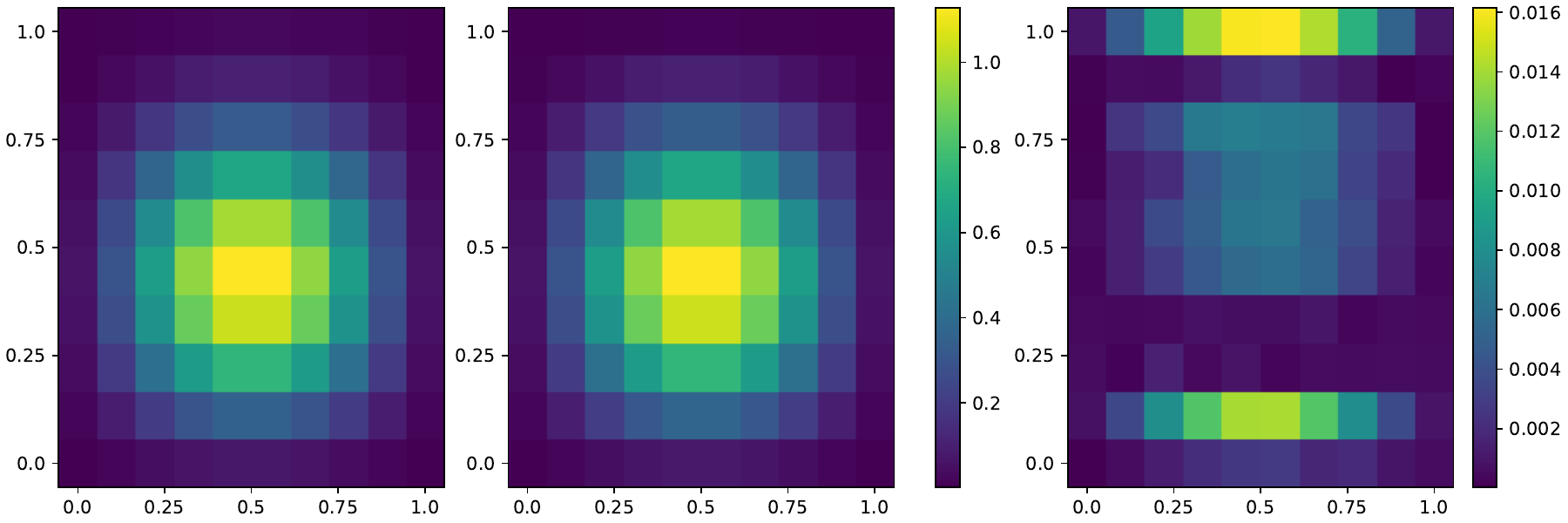}
     \caption{Approximate solution (left), exact solution (center) and absolute error (right) for a particular pair of $m_1$ and $m_2$ during training. \label{Train_plot_advection_diffusion}}
	 \end{subfigure}
	 \caption{Numerical results obtained for the parametric nonlinear advection-diffusion problem in equation \eqref{Nonlinear Advection-Diffusion Equation} at $X=1$, $T=1$, and $C=20$.\label{Nonlinear Advection-Diffusion Equation-example-plots}}
\end{figure}

\begin{table}[H]
\centering
\begin{tabular}{lcccc}
\hline
$\sigma$&$\tanh$&$\displaystyle\frac{1}{1+\exp^{-x}}$&Relu&Silu\\
\hline
$L^2_{Te}$&$8.78e-05$&$1.15e-03$&$5.37e-03$&$1.69e-03$\\
$L_2^{\infty}$&$1.17e-01$&$3.58e-01$&$1.15e+00$&$7.76e-01$\\
\hline
\end{tabular}
\caption{Error obtained on changing the activation function used in the network for equation \eqref{Nonlinear Advection-Diffusion Equation}. ($X=1$, $T=1$, and $C=20$.)\label{Nonlinear Advection-Diffusion Equation-Error-vs-activation-fn}}
\end{table}

Further, to validate the proposed method for longe range domains, we consider the values of $T=10$ and $X = 4$ and apply the proposed method in the domain. The results obtained by using the proposed scheme for $x_1\in[0,10]$ and $x_2\in[0,4]$ are demonstrated in Figure \ref{Nonlinear Advection-Diffusion Equation-Extended-example-plots-1} and \ref{Nonlinear Advection-Diffusion Equation-Extended-example-plots}. The exact, approximated solution and the absolute error obtained are plotted in Figure \ref{Test_plot_activation_advection_diffusion_extended}. Further, the two-dimensional surface plot is presented to show the approximation. This verifies that the proposed method is capable of performing for larger domains under highly nonlinear solutions. Further, increasing the number of neurons produces a better approximation. This is validated by the Figure \ref{L_2_hat_Omega_L_Advection_Diffusion_Extended}, \ref{L_inf_hat_Omega_L_Advection_Diffusion_Extended} and \ref{Loss_plot_different_activation_advection_diffusion}.

\begin{figure}[H]
	 \begin{subfigure}[b]{1\textwidth}
     \includegraphics[width=1\textwidth, height=0.3\textwidth]{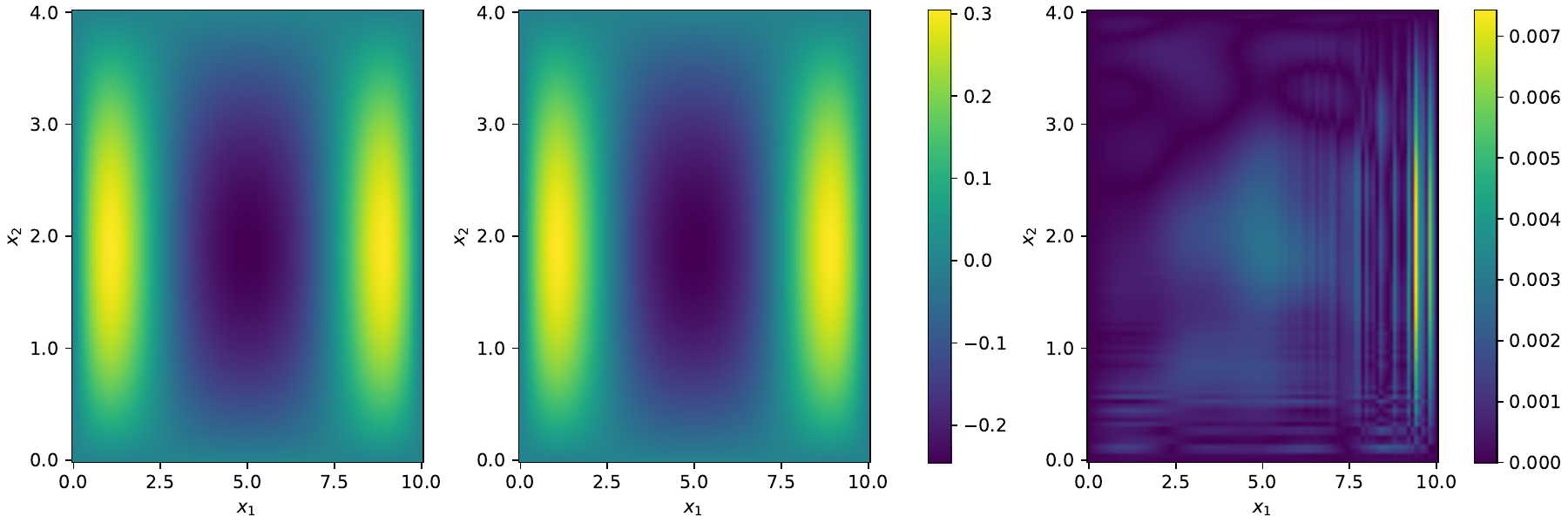}
     \caption{Approximate solution (left), exact solution (center), and absolute error (right) for a particular pair of $m_1$ and $m_2$ during testing. \label{Test_plot_activation_advection_diffusion_extended}}
	 \end{subfigure}
     \caption{Numerical results obtained for the parametric nonlinear advection-diffusion problem in equation \eqref{Nonlinear Advection-Diffusion Equation} at $X=4$, $T=10$, and $C=0.01$.\label{Nonlinear Advection-Diffusion Equation-Extended-example-plots-1}}
\end{figure}
\begin{figure}[H]
\begin{subfigure}[b]{0.45\textwidth}
     \includegraphics[width=1\textwidth, height=0.7\textwidth]{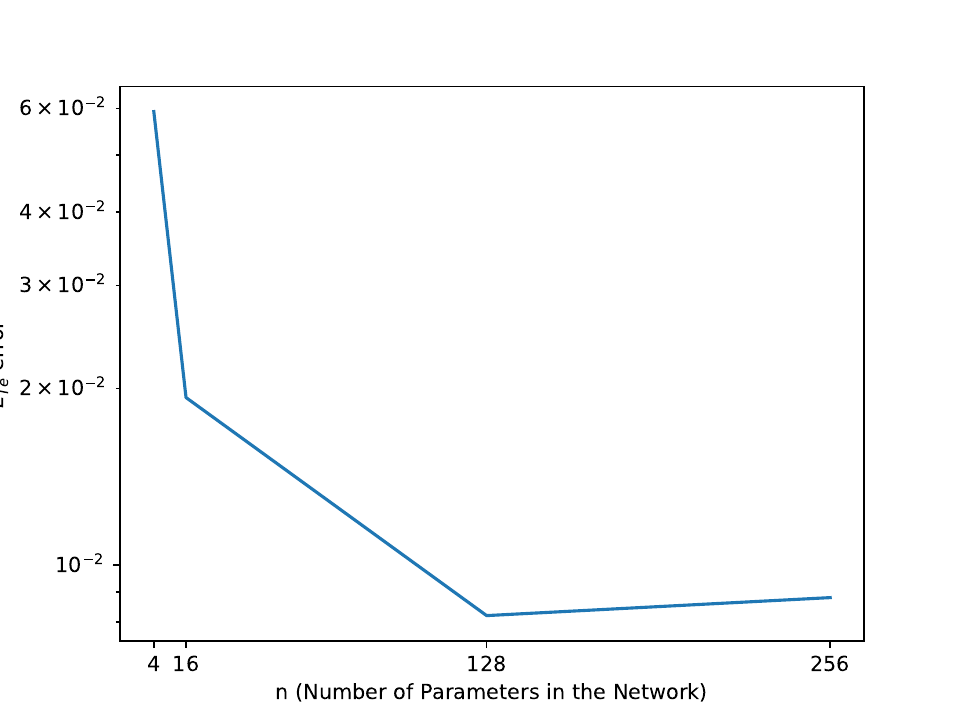}
     \caption{Convergence of $L^{\infty}_{Te}$ error as number of parameters in the neural network $n$ increases. \label{L_inf_hat_Omega_L_Advection_Diffusion_Extended}}
	 \end{subfigure}
	 \begin{subfigure}[b]{0.45\textwidth}
     \includegraphics[width=1\textwidth, height=0.7\textwidth]{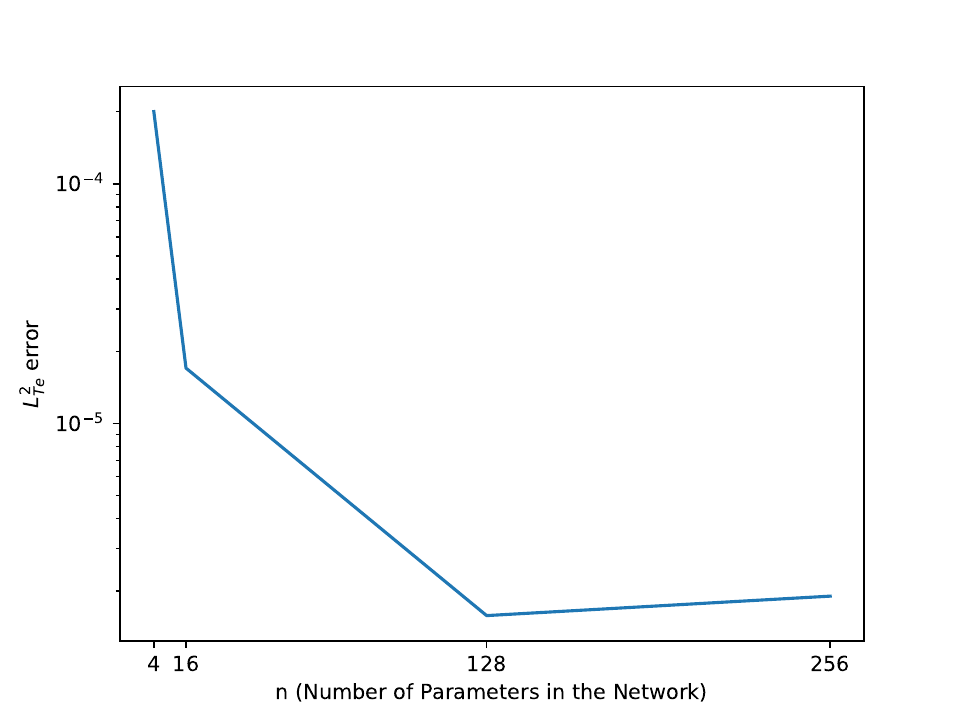}
     \caption{Convergence of $L^{2}_{Te}$ error as the number of parameters in the neural network $n$ increases. \label{L_2_hat_Omega_L_Advection_Diffusion_Extended}}
	 \end{subfigure}
	 \begin{subfigure}[b]{1\textwidth}
     \includegraphics[width=1\textwidth, height=0.3\textwidth]{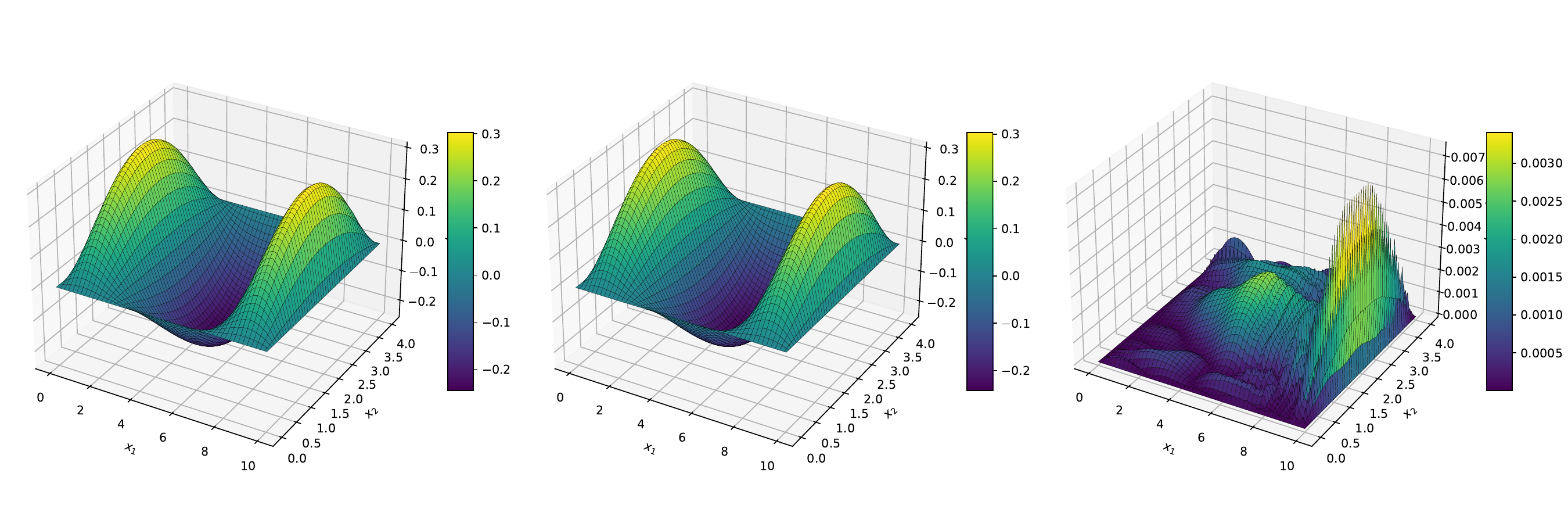}
     \caption{Approximate solution (left), exact solution (center), and absolute error (right) for a particular pair of $m_1$ and $m_2$ during testing in $2d$ view. \label{Test_plot_activation_advection_diffusion_extended_surface}}
	 \end{subfigure}
     \begin{subfigure}[b]{0.5\textwidth}
     \includegraphics[width=1\textwidth, height=0.7\textwidth]{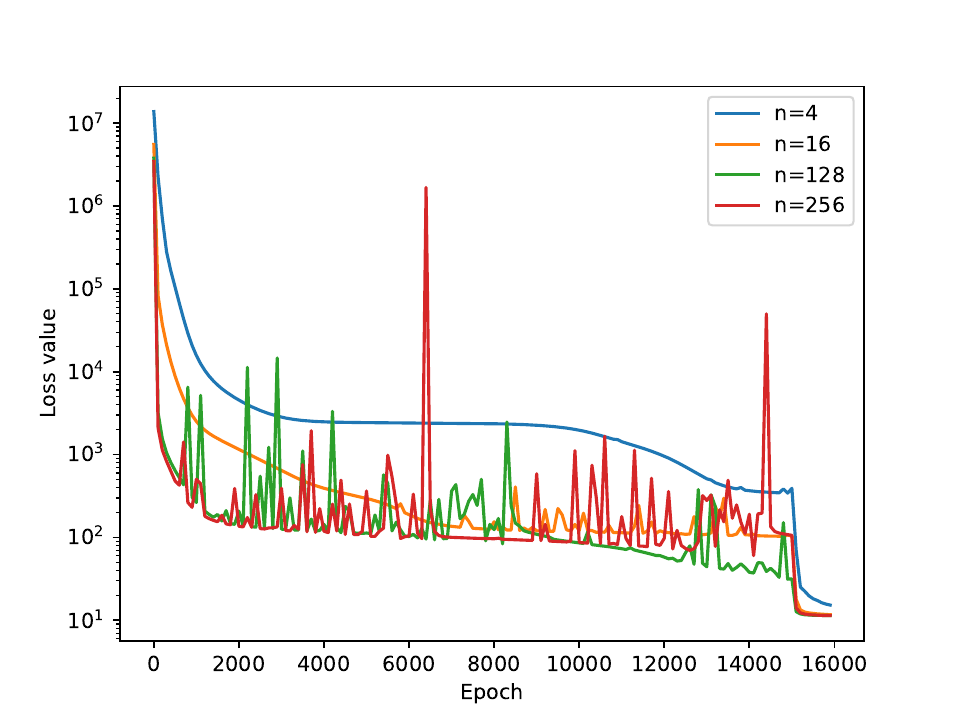}
     \caption{Loss value as the number of epochs increases. \label{Loss_plot_different_activation_Advection_Diffusion_extended_a_constant}}
	 \end{subfigure}
	 \caption{Numerical results obtained for the parametric nonlinear advection-diffusion problem in equation \eqref{Nonlinear Advection-Diffusion Equation} at $X=4$, $T=10$, and $C=0.01$.\label{Nonlinear Advection-Diffusion Equation-Extended-example-plots}}
\end{figure}

\subsection{Non-homogeneous Advection equation}
This subsection presents the application of the proposed method to the non-homogeneous advection equation. For this, we consider the FDE to be parametrized by constant and variable coefficients. 
\subsubsection{With constant coefficents}
This section presents the application of the proposed method to the parametrized time-fractional Advection equation with time and space denoted by $x_1$ and $x_2$ respectively. The equation considered for this study is as follows:
\begin{equation}\label{Advection equation With constant coefficents}
\centering
\begin{array}{lcl}
_0^CD^{\zeta}_{x_1}z(x_1,x_2) - a z_{x_2} &=& \displaystyle f(x_1,x_2; \Upsilon),~(x_1,x_2) \in [0,1]\times[0,1],
\end{array}
\end{equation}
where the function $f$ is chosen to be 
\begin{equation}\nonumber
\centering
\begin{array}{lcl}
f(x_1,x_2; \Upsilon)&=&\displaystyle 20^a x_1^{m_1} \left(\frac{x_1^{-\zeta} \Gamma (m_1+1) (\zeta+(m_1+1) (x_1-1)) \sin (a (x_2-1) x_2)}{\Gamma (-\zeta+m_1+2)}-a \zeta (x_1-1) \right.\\
&&\times(2 x_2-1) \cos (a (x_2-1) x_2)\bigg{)}.
\end{array}
\end{equation}
The exact solution of this system is given by 
\begin{equation}\nonumber
\centering
u(x_1,x_2) = 20^a (1-x_1) x_1^{m_1} \sin (a (1-x_2) x_2).
\end{equation}
It could be seen from the above equation \eqref{Advection equation With constant coefficents}, the parameter $a$ and the parameter $m_1$ are randomly generated. We chose it to follow $U[1,1.5]$. The results obtained by using the proposed method are presented in Figure \ref{Advection equation With constant coefficents-example-plots}. The error obtained by increasing the number of parameters in each layer is given in Figure \ref{L_2_hat_Omega_L_advection} and \ref{L_inf_hat_Omega_L_advection} by using the $L^2_{Te}$ and $L^{\infty}_{Te}$ respectively. From these plots, it can be inferred that the error in approximating the unknown function $z$ is reduced as the number of neurons increases from $4$ to $64$. Further, it could be seen from Figure \ref{Loss_plot_different_activation_advection_a_constant} that as the number of neurons is $4$ and $8$, the loss function does not get reduced. However, as the number of neurons increases, the loss is reduced. Further, we also present the results obtained at different values of $x_1$ in Figure \ref{particular_t_advection_a_constant}. This shows that the error from the proposed approach to varying values of $x_2$ is of order $10^{-2}$. Further, the results obtained on training are presented in Figure \ref{Train_plot_activation_advection_a_constant} along with the exact solution and the error plot. This verifies that the error is of order $10^{-2}$. The network was trained for a total of $500$ epochs by using the LBFGS optimizer.
\begin{figure}[H]
     \begin{subfigure}[b]{0.5\textwidth}
     \includegraphics[width=1\textwidth, height=0.7\textwidth]{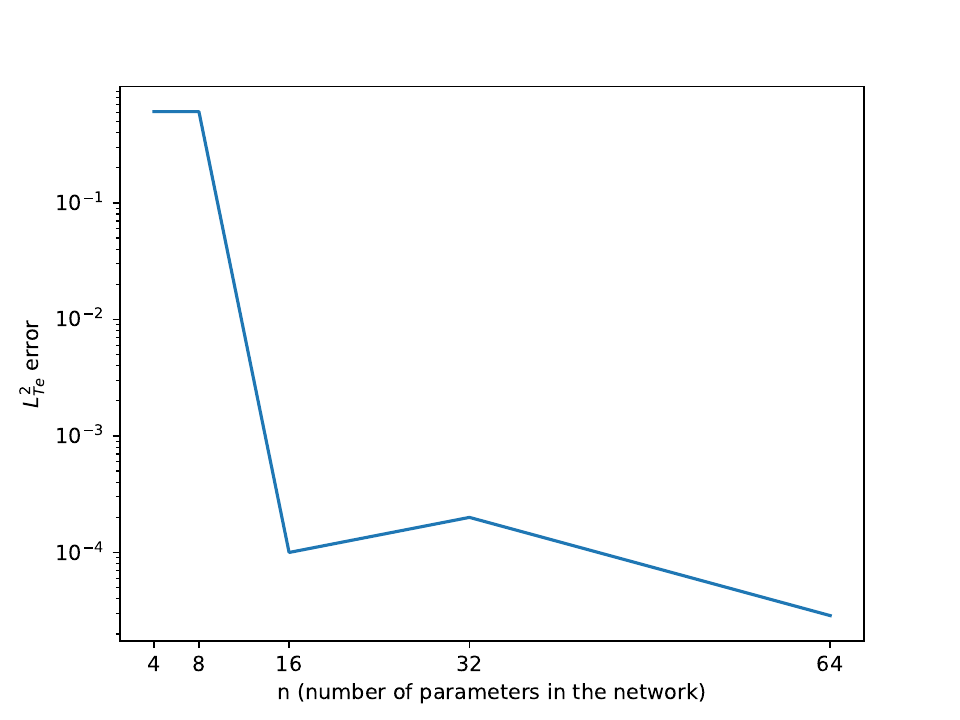}
     \caption{Convergence of $L^{2}_{Te}$ error as number of neurons $n$ increases. \label{L_2_hat_Omega_L_advection}}
	 \end{subfigure}
	 \begin{subfigure}[b]{0.5\textwidth}
     \includegraphics[width=1\textwidth, height=0.7\textwidth]{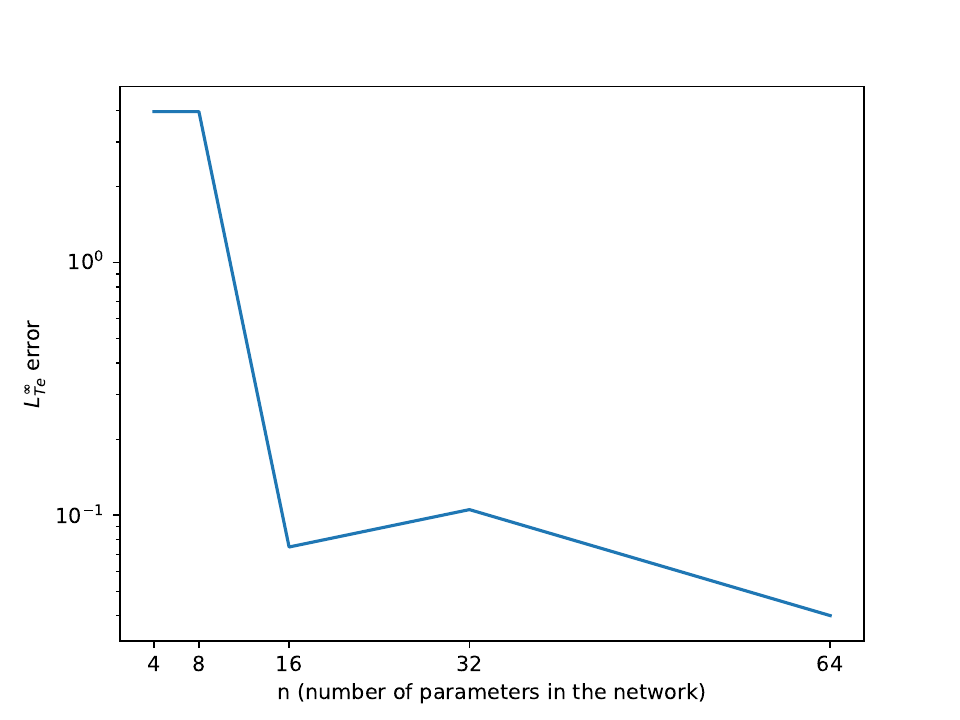}
     \caption{Convergence of $L^{\infty}_{Te}$ error as number of neurons $n$ increases. \label{L_inf_hat_Omega_L_advection}}
	 \end{subfigure}
	 \caption{Numerical results obtained for the parametric advection equation with constant coefficient given in equation \eqref{Advection equation With constant coefficents}.\label{Advection equation With constant coefficents-example-plots}}
\end{figure}

\begin{figure}[H]
 \begin{subfigure}[b]{1\textwidth}
     \includegraphics[width=1\textwidth, height=0.3\textwidth]{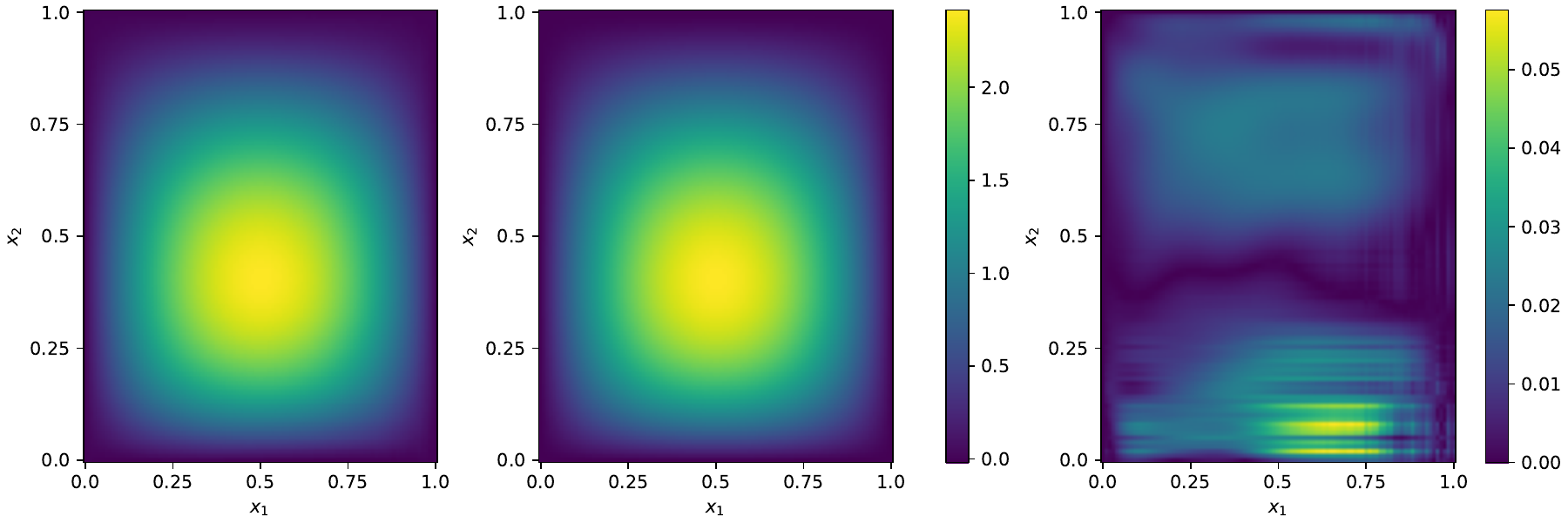}
     \caption{Approximate solution (left), exact solution (center) and absolute error (right) for a particular pair of $m_1$ and $m_2$ during training. \label{Train_plot_activation_advection_a_constant}}
	 \end{subfigure}
 \begin{subfigure}[b]{1\textwidth}
     \includegraphics[width=1\textwidth, height=0.6\textwidth]{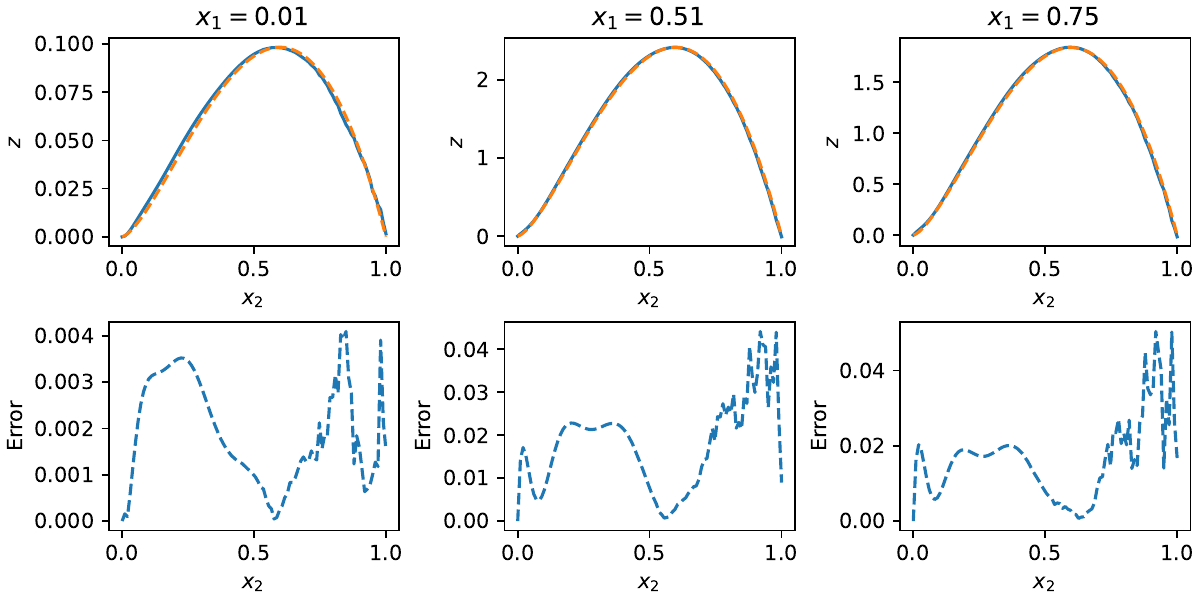}
     \caption{Approximated solution (orange) obtained at various values of $x_1$ along with the exact solution (blue) in the top. The absolute error at the specified $x_1$ in the bottom. \label{particular_t_advection_a_constant}}
	 \end{subfigure}
      \begin{subfigure}[b]{0.5\textwidth}
     \includegraphics[width=1\textwidth, height=0.7\textwidth]{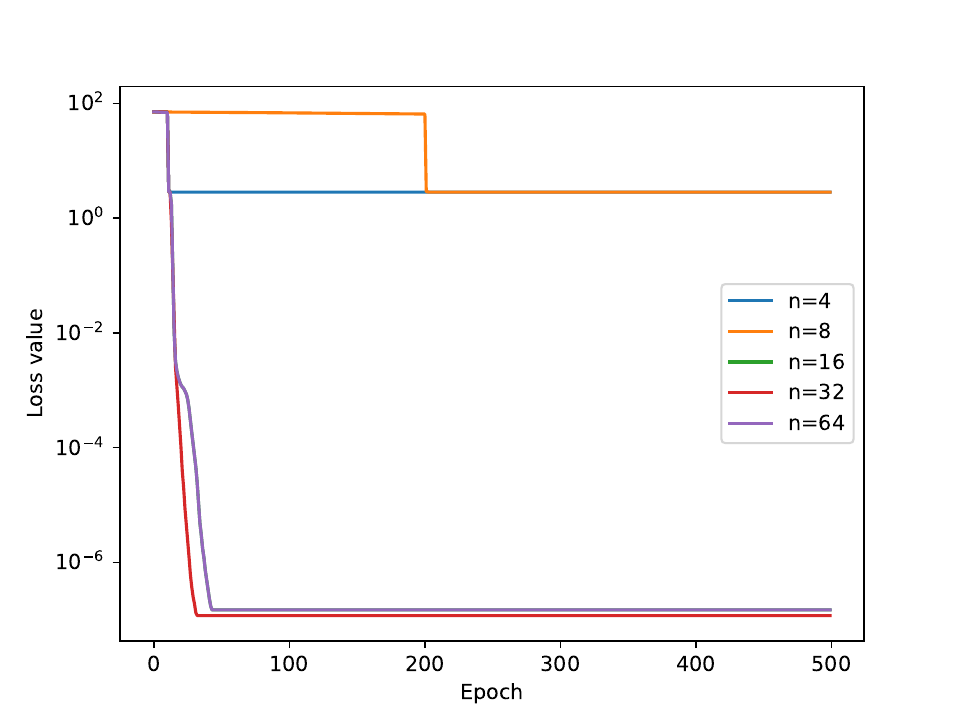}
     \caption{Loss values as the number of epochs increase. \label{Loss_plot_different_activation_advection_a_constant}}
	 \end{subfigure}
	 \caption{Numerical results obtained for the parametric advection equation with constant coefficient given in equation \eqref{Advection equation With constant coefficents}.\label{Advection equation With constant coefficents-example-plots-1}}
\end{figure}

\subsubsection{With variable coefficents}
This section presents the application of the proposed method to the parametrized time-fractional Advection equation with an external forcing term, where the coefficient is a function that is randomly chosen from the Gaussian random field. Further time and space are denoted by $x_1$ and $x_2$ respectively. The equation considered for this study is as follows:
\begin{equation}\label{Advection equation With variable coefficents}
\centering
\begin{array}{lcl}
_0^CD^{\zeta}_{x_1}z(x_1,x_2) - a(x_1) z_{x_2} &=& \displaystyle f(x_1,x_2; \Upsilon),~(x_1,x_2) \in [0,1]\times[0,1],
\end{array}
\end{equation}
where the function $f$ is chosen to be 
\begin{equation}\nonumber
\centering
\begin{array}{lcl}
f(x_1,x_2; \Upsilon)&=&\displaystyle \frac{2 (x_2-1) x_2 x_1^{2-\zeta} (\zeta+3 x_1-3) \sin (x_2)}{\Gamma (4-\zeta)}\\
&&\displaystyle-a(x_1) (x_1-1) x^3 ((2 x_2-1) \sin (x_2)+(x_2-1) x_2 \cos (x_2)).
\end{array}
\end{equation}
The exact solution of this system is given by 
\begin{equation}\nonumber
\centering
u(x_1,x_2) = 200(1 - x_1)(x_1^2)(1-x_2)x_2\sin(x_2).
\end{equation}
In this example, we consider the function $a(x_1)$ the randomly initialized function. To obtain training functions, we generate the function $a(x_1)$ from the Gaussian Random Fields following $\displaystyle N\left(0,\frac{1}{(N+1)^2}\right)$. The fractional order is fixed to be $0.7$. The network is trained for a total of $500$ epochs using the LBFGS optimizer. The numerical integration for $x_1$ and $x_2$ is performed using the Gauss-Legendre quadrature rule by fixing the degree to $20$. The graphical results of the proposed approach are presented in Figure \ref{Advection equation With variable coefficents-example-plots}. The loss value as the number of epochs increases is presented in Figure \ref{Loss_plot_different_activation_advection_a_variable}. In addition, we present the solution trajectory of the approximated solution in Figure \ref{particular_t_advection_a_variable} at particular values of $x_1$. This plot shows that the error is of order $10^{-2}$. Also, we present the heatmap of the approximated solution, the exact solution, and the absolute error in Figure \ref{Train_plot_activation_advection_a_variable}.

\begin{figure}[H]
	 \begin{subfigure}[b]{1\textwidth}
     \includegraphics[width=1\textwidth, height=0.3\textwidth]{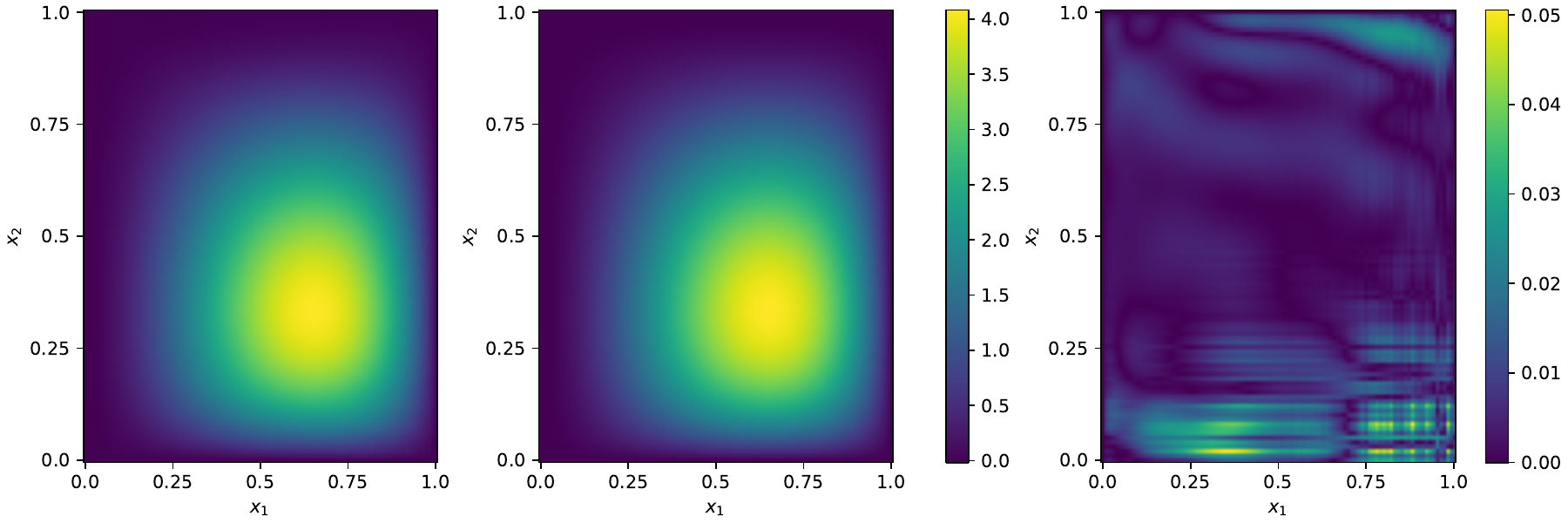}
     \caption{Approximate solution (left), exact solution (center) and absolute error (right) for a particular pair of $m_1$ and $m_2$ during training. \label{Train_plot_activation_advection_a_variable}}
	 \end{subfigure}
	 \caption{Numerical results obtained for the parametric advection equation with variable coefficient given in equation \eqref{Advection equation With variable coefficents}.\label{Advection equation With variable coefficents-example-plots}}
\end{figure}

\begin{figure}[H]
	 \begin{subfigure}[b]{1\textwidth}
     \includegraphics[width=1\textwidth, height=0.6\textwidth]{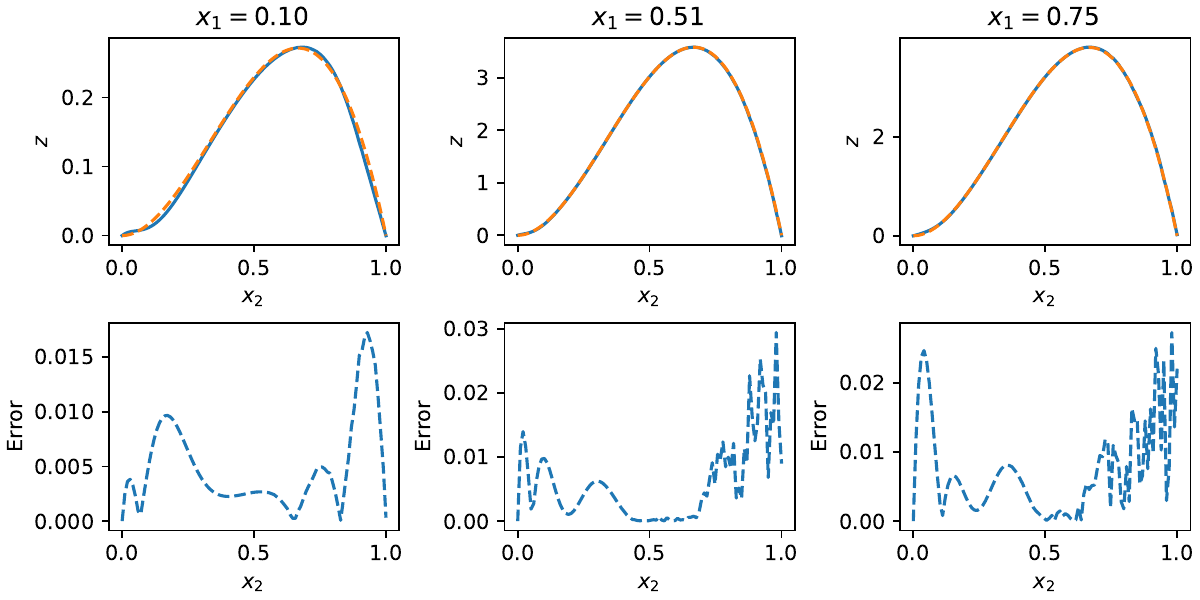}
     \caption{Approximated solution (orange) obtained at various values of $x_1$ along with the exact solution (blue) in the top. The absolute error at the specified $x_1$ in the bottom.. \label{particular_t_advection_a_variable}}
	 \end{subfigure}
     \begin{subfigure}[b]{0.5\textwidth}
     \includegraphics[width=1\textwidth, height=0.7\textwidth]{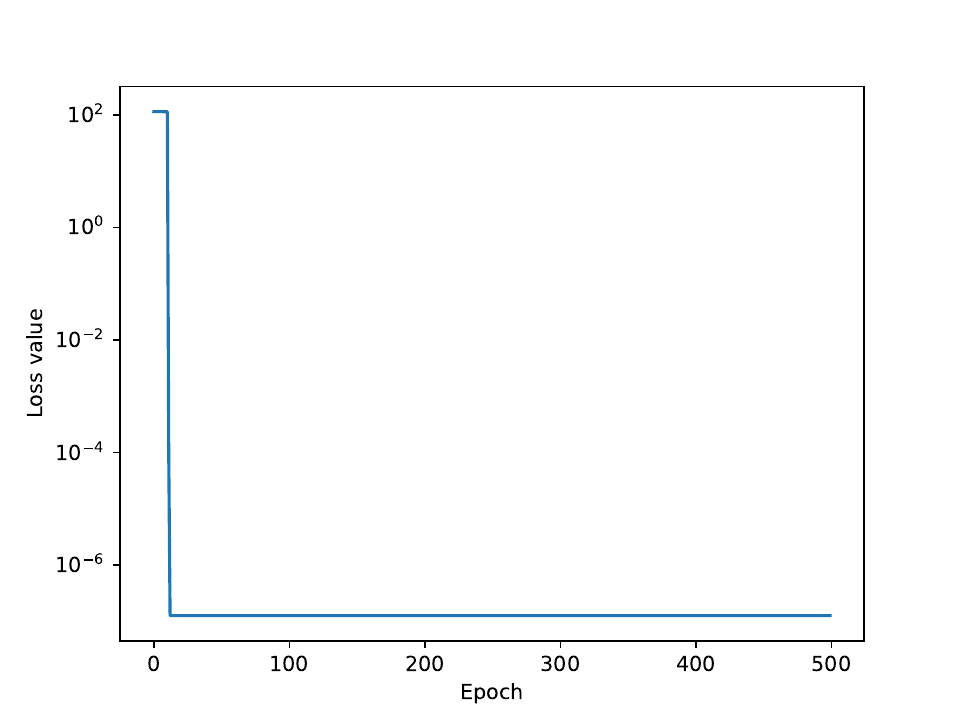}
     \caption{Loss values as the number of epoch increases. \label{Loss_plot_different_activation_advection_a_variable}}
	 \end{subfigure}
	 \caption{Numerical results obtained for the parametric advection equation with variable coefficient given in equation \eqref{Advection equation With variable coefficents}.\label{Advection equation With variable coefficents-example-plots-1}}
\end{figure}


\begin{table}[H]
\centering
\begin{tabular}{ccccccc}
\hline
Problem&$\Upsilon$&Domain&$\zeta$&N&m&\\
\hline
\eqref{linear-1-d-example-eq}&$m_1$, $m_2$&$U[3,5]$&$1.5$&$10$&$10$&$ $\\
\eqref{Heat equation}&$m_1$, $m_2$&$U[5, 7]$&$0.7$&$10$&$10$&$ $\\
\eqref{Nonlinear Advection-Diffusion Equation}&$m_1$, $m_2$&$U[1,1.5]$&$0.7$&$10$&$20$&$ $\\
\eqref{Advection equation With constant coefficents}&$a,~m_1$&$U[1,1.5]$&$0.7$&$10$&$15$&$ $\\
\eqref{Advection equation With variable coefficents}&$a(x_1)$& GRF $\sim$ $\displaystyle N\left(0,\frac{1}{(N+1)^2}\right)$&$0.7$&$10$&$20$&$ $\\
\hline
\end{tabular}
\caption{Details on the randomly chosen parameters $\Upsilon$ for all the problems.}
\end{table}


\section{Conclusion}\label{conclusion}
In this paper, the traditional numerical scheme of the Legendre-Galerkin method and the physics-informed neural network strategy are combined to develop a novel method for simulating the linear and non-linear parametric time-fractional differential equation. It is also noted that there is no numerical method to simulate parametric fractional order equations in the literature. A weak form of the differential equation is used to avoid the need to calculate higher-order derivatives. A clear error analysis is presented theoretically under the assumption of stability, Lipchitzness, and boundedness of the non-linear operator. The overall error is reduced as the number of parameters in the network and the number of sampling points used in the Monte Carlo simulation increase. This theoretical result is also numerically proved by considering different well-known examples parametrized in the source function, the coefficients, and the solution. The results demonstrate the effect of increasing the number of neurons and the sampling points. Additionally, the effect of using different activation function is also shown. From which, it was concluded that for unbounded activation the error is large. This further supports the theory.

One limitation of the proposed approach is that when the boundary condition is periodic, the proposed method is not adaptable and requires test functions other than the Legendre polynomial to approximate the solution. This is a potential area for future research. Additionally methods for studying impulsive and space-time fractional systems could also be explored. Additionally, during simulation it was observed that increasing number of basis or the degree of Legendre polynomial leads to instability in the calculation of the fractional derivative using the series formula. Thus, subsequent research to overcome this could be conducted.  

\section*{Data Availability Statement}
The data used in this research is available/mentioned within the manuscript.
\section*{Funding}
NA
\section*{Conflict of interest}
This work does not have any conflicts of interest. 


\addcontentsline{toc}{section}{References}

\newpage
\section*{Appendix:The matrices $\hat{H}$ and $\hat{M}$ generated for $N=12$.} \label{appendix-1}
\begin{table}[H]
    \centering
  \adjustbox{padding=-30pt, max width=\textwidth, max height=0.5\textheight}{
  \begin{tabular}{cc}
      \rotatebox{270}{
$\hat{H} = \left[
\begin{array}{rrrrrrrrrrrrrr}
 -6.44 & -3.00 & 0.58 & -0.23 & 0.11 & -0.06 & 0.04 & -0.02 & 0.02 & -0.01 & 0.0 & -0.00 \\
 2.80 & -6.23 & -3.98 & 0.91 & -0.39 & 0.21 & -0.13 & 0.08 & -0.06 & 0.04 & -0.03 & 0.02 \\
 -9.14 & 1.38 & -6.33 & -4.81 & 1.22 & -0.57 & 0.32 & -0.20 & 0.14 & -0.10 & 0.075 & -0.05 \\
 5.94 & -7.47 & 0.66 & -6.50 & -5.56 & 1.51 & -0.74 & 0.44 & -0.28 & 0.20 & -0.14 & 0.11 \\
 -12.09 & 3.43 & -6.85 & 0.17 & -6.71& -6.25 & 1.79 & -0.91 & 0.55 & -0.37 & 0.26 & -0.19 \\
 8.98& -9.15 & 2.29 & -6.56 & -0.19 & -6.92 & -6.88 & 2.05 & -1.08 & 0.67 & -0.46 & 0.33 \\
 -15.07 & 5.31 & -7.95 & 1.61 & -6.42 & -0.50 & -7.14 & -7.47 & 2.30 & -1.24 & 0.79 & -0.55 \\
 12.01 & -10.91 & 3.70 & -7.31 & 1.14 & -6.37 & -0.766 & -7.35 & -8.03 & 2.54 & -1.39 & 0.90 \\
 -18.07 & 7.15 & -9.16 & 2.77 & -6.95 & 0.78 & -6.36 & -1.00 & -7.55 & -8.55 & 2.76 & -1.54 \\
 15.02 & -12.69 & 5.04 & -8.21 & 2.15 & -6.72 & 0.49 & -6.38 & -1.21 & -7.76 & -9.06 & 2.98 \\
\end{array}
\right]$}   

&  

\adjustbox{angle=270, padding=0.1pt}{
$\small\hat{M} = \left[
\begin{array}{rrrrrrrrrrrrrr}
 \text{1.9e-17} & -2. & \text{1.8e-16} & -\text{3.3e-16} & -\text{1.2e-15} & -\text{1.3e-14} & \text{1.5e-13} & \text{1.7e-13} & -\text{7.2e-13} & -\text{6.0e-12} & \text{5.4e-11} & -\text{1.6e-10} \\
 -\text{2.4e-15} & -\text{4.4e-16} & -2. & \text{8.8e-16} & \text{1.8e-15} & \text{1.0e-14} & -\text{2.3e-14} & \text{3.0e-14} & \text{6.3e-13} & \text{3.4e-12} & \text{1.5e-11} & \text{1.5e-11} \\
 \text{4.7e-15} & -\text{3.6e-15} & -\text{8.3e-16} & -2. & \text{4.8e-15} & -\text{1.4e-14} & \text{1.1e-13} & -\text{1.6e-13} & \text{3.6e-12} & \text{1.0e-11} & \text{3.6e-12} & \text{3.0e-10} \\
 \text{2.5e-15} & -\text{9.8e-15} & -\text{7.7e-16} & \text{3.3e-15} & -2. & -\text{5.3e-15} & -\text{3.7e-14} & \text{3.4e-14} & \text{1.4e-12} & \text{1.1e-11} & -\text{5.7e-11} & \text{2.9e-11} \\
 -\text{8.4e-14} & -\text{2.1e-14} & \text{1.1e-14} & \text{6.7e-15} & \text{1.2e-14} & -2. & -\text{1.0e-13} & \text{1.5e-13} & \text{9.7e-13} & \text{3.1e-12} & -\text{8.0e-11} & \text{1.6e-10} \\
 \text{1.4e-13} & \text{2.6e-13} & \text{1.2e-13} & \text{1.9e-13} & \text{8.4e-14} & -\text{1.6e-13} & -2. & \text{1.7e-13} & -\text{3.4e-12} & -\text{6.8e-12} & -\text{6.4e-11} & \text{2.8e-10} \\
 \text{4.1e-12} & -\text{5.2e-14} & \text{2.5e-13} & -\text{1.4e-12} & -\text{9.0e-13} & -\text{2.3e-13} & \text{1.4e-13} & -2. & \text{1.6e-12} & -\text{6.9e-12} & -\text{6.8e-11} & -\text{1.6e-9} \\
 \text{2.2e-12} & -\text{8.0e-12} & -\text{1.4e-12} & -\text{3.0e-12} & -\text{6.0e-13} & \text{3.3e-12} & \text{3.5e-12} & \text{3.6e-12} & -2. & -\text{4.6e-12} & -\text{3.7e-11} & \text{3.48e-11} \\
 -\text{1.6e-11} & -\text{9.8e-12} & -\text{1.7e-11} & \text{2.9e-11} & -\text{7.0e-12} & \text{1.9e-12} & -\text{6.6e-12} & \text{3.2e-11} & \text{4.8e-12} & -2. & \text{1.2e-11} & -\text{1.0e-10} \\
 -\text{1.8e-10} & \text{7.4e-11} & -\text{6.4e-11} & -\text{7.4e-11} & \text{2.0e-11} & \text{1.2e-10} & \text{5.1e-11} & -\text{8.2e-11} & -\text{3.1e-11} & \text{2.6e-11} & -2. & -\text{5.3e-10} \\
\end{array}
\right]$}\\
    \end{tabular}}
\end{table}

\end{document}